\newtheorem{theorem}{Theorem}[section]
\newtheorem{lemma}[theorem]{Lemma}
\newtheorem{corollary}[theorem]{Corollary}
\newtheorem{result}[theorem]{Result}
\theoremstyle{definition}
\newtheorem*{definition*}{Definition}
\newtheorem{proposition}[theorem]{Proposition}
\newtheorem*{corollary*}{Corollary}
\newtheorem*{lemma*}{Lemma}
\newtheorem*{remark*}{Remark}
\newcommand{\F}{\mathbb {F}}
\newcommand{\N}{\mathrm {N}}
\newcommand{\Tr}{\mathrm {Tr}}
\def\cD{\mathcal D}
\def\cC{\mathcal C}
\def\cS{\mathcal S}
\definecolor{dgreen}{rgb}{0.13,0.7,.63}
\DeclareMathOperator{\PG}{{PG}}
\DeclareMathOperator{\AG}{{AG}}
\newcommand{\la}{\langle}
\newcommand{\ra}{\rangle}
\title{On a conjecture about maximum scattered subspaces of $\mathbb{F}_{q^6}\times \mathbb{F}_{q^6}$\vspace{.3cm}}
\author[$1$]{Daniele Bartoli \footnote{Corresponding author.}}
\author[$2$]{Bence Csajb\'ok \footnote{Present address: Dipartimento di Meccanica, Matematica e Management, Politecnico di Bari, Via Orabona 4, 70125 Bari, Italy}}
\author[$3$]{Maria Montanucci}
\affil[$1$]{Department of Mathematics and Informatics, University of Perugia, Perugia,  Italy,  \textit{daniele.bartoli@unipg.it}\vspace*{.3cm}}
\affil[$2$]{ELKH--ELTE Geometric and Algebraic Combinatorics Research Group,
	ELTE E\"otv\"os Lor\'and University, Budapest, Hungary, Department of Geometry, 
	1117 Budapest, P\'azm\'any P.\ stny.\ 1/C, Hungary, \textit{csajbokbence@caesar.elte.hu} \vspace*{.3cm}}
\affil[$3$]{Department of Applied Mathematics and Computer Science, Technical University of Denmark, Kongens Lyngby, Denmark, \textit{marimo@dtu.dk}}
\date{}
\begin{document}
\maketitle

\begin{abstract}
We prove that under the action of $\mathrm{GL}(2,q^6)$ there are $\lfloor (q^2+q+1)(q-2)/2 \rfloor$ equivalence classes of maximum scattered subspaces of the form $U_b=\{(x,bx^q+x^{q^4}) : x\in \F_{q^6}\}$ in $\F_{q^6}\times \F_{q^6}$. This verifies a conjecture of Csajb\'ok, Marino, Polverino and Zanella from 2018.
\end{abstract}

\medskip

\noindent
{\bf MSC:} 51E20; 05B25; 51E22\\
{\bf Keywords:} Scattered subspaces; Linear sets; MRD-codes
\section{Introduction}

Maximum scattered subspaces are not only objects of intrinsic interest in finite geometry but also powerful tools for the construction of MRD-codes, projective two-weight codes, and strongly regular graphs. In \cite{CMPZ} Csajb\'ok, Marino, Polverino, and Zanella introduced a new family of maximum scattered subspaces $U_b=\{(x,f_b(x)) : x\in \F_{q^6}\}$ in $\F_{q^6} \times \F_{q^6}$ arising from polynomials of type $f_b(x)=bx^q+x^{q^4}$ for certain choices of $b \in \mathbb{F}_{q^6}$. 
In this paper, first we find necessary and sufficient conditions for $b$ to obtain a maximum scattered subspace. Then we prove \cite[Conjecture 7.5]{CMPZ} on the number of new and inequivalent maximum scattered subspaces of this family.

For a prime power $q$ let $V$ denote an $r$-dimensional vector space over the finite field $\F_{q^n}$.
Then $V$ is also an $\F_q$-vector space of dimension $rn$. A $t$-spread of $V$ is a partition of $V\setminus \{{\bf 0}\}$ by $\F_q$-subspaces of dimension $t$. In particular 
$\cD:=\{ \la {\bf u} \ra_{\F_{q^n}}\setminus \{{\bf 0}\} : {\bf u} \in V\}$ is the so called Desarguesian $n$-spread of $V$. An $\F_q$-subspace $U$ of $V$ is called scattered with respect to a spread $\cS$ if $U$ meets each element of $\cS$ in at most a one-dimensional $\F_q$-subspace. 
In \cite{BL2000} Blokhuis and Lavrauw proved that $rn/2$ is the maximum dimension of a scattered subspace of $V$ w.r.t. a Desarguesian spread. In this paper by a scattered subspace we will always mean a scattered subspace w.r.t. the Desarguesian spread $\cD$. When $rn$ is even then scattered subspaces of dimension $rn/2$ are called maximum scattered.  

Denote by $\Lambda$ the projective space $\PG(r-1,q^n)$ formed by the lattice of $\F_{q^n}$-subspaces of $V$. For a $k$-dimensional $\F_q$-subspace $U$ of $V$ the linear set of rank $k$ defined by $U$ is
\[ L_U = \{\langle {\bf u}\rangle_{\F_{q^n}} :  {\bf u} \in  U \setminus\{{\bf 0}\}\}\subseteq \Lambda.\]
When dealing with linear sets, equivalence is an important issue. 
Two linear sets $L_U$ and $L_W$ of $\Lambda$ are called $\mathrm{P}\Gamma \mathrm{L}$-equivalent
if there exists an element $\psi\in \mathrm{P}\Gamma \mathrm{L}(r, q^n)$ such that $\psi(L_{U}) =L_W$. It is possible that   $L_U$ and $L_W$ are $\mathrm{P}\Gamma \mathrm{L}$-equivalent  even if $U$ and $W$ are not $\Gamma \mathrm{L}$-equivalent, i.e. there is no $\varphi \in \Gamma \mathrm{L}(r,q^n)$ such that $\varphi(U)=W$; see for instance \cite{5,8}. 

If $U$ is a (maximum) scattered subspace, then $L_U$ is called a (maximum) scattered linear set. Maximum scattered linear sets of $\Gamma$ are two-intersection sets w.r.t. hyperplanes: they meet hyperplanes of $\Gamma$ either in $(q^{\frac{rn}{2}-n}-1)/(q-1)$ or in $(q^{\frac{rn}{2}-n+1}-1)/(q-1)$ points; see \cite{BL2000}. It follows that they define projective two-weight codes and strongly regular graphs; see \cite{c20}. 
After the series of papers \cite{2,3,BL2000,6,18} it is now known that scattered subspaces of dimension $rn/2$ exist whenever $rn$ is even. 

We briefly explain why maximum scattered subspaces of $\F_{q^n}^2$ are of particular interest. First of all, if $U_i$ is a maximum scattered subspace in $V_i=\F_{q^n}^{r_i}$, for $i=1,2$, then $U_1\oplus U_2$ is a maximum scattered subspace in $V_1\oplus V_2$, see \cite{3}. Thus, maximum scattered subspaces of  $\F_{q^n}^2$ can be used to construct maximum scattered subspaces in $\F_{q^n}^r$ for $r\geq 4$ as well. A maximum scattered $\F_q$-subspace of $\F_{q^n}^2$ is equivalent to a subspace $\{(x,f(x)) : x \in \F_{q^n}\}$ where $f(x)$ is a $q$-polynomial and there are $(q^n-1)/(q-1)$ directions determined by the graph of $f(x)$. By a result of Ball, Blokhuis, Brouwer, Storme, and Sz\H{o}nyi \cite{B2003, BBBSSz} this is the maximum number of directions that can be determined by the graph of an $\F_{q^n} \rightarrow \F_{q^n}$ function which determines less than $(q^n+3)/2$ directions and meets at least one line of $\AG(2,q^n)$ in at least $q$ points. Sheekey in \cite[Section 5]{Sheekey} discovered a  relation between maximum rank distance codes and maximum scattered subspaces of $\F_{q^n}^2$. For more details we refer also to \cite[Section 6]{CMPZ} and the survey \cite{SheekeySurvey}. Maximum scattered $\F_2$-subspaces of $\F_{2^n}^2$ correspond to translation ovals and they have been classified by Payne \cite{Payne}, see also \cite{Hirs0,Hirsbook} by Hirschfeld. Up to $\Gamma \mathrm{L}$-equivalence these subspaces are of the form $\{(x,x^{2^s}):x\in \F_{2^n}\}$ with $\gcd(s,n)=1$. This example can be generalized for every $q$, but for $q>2$ there are other examples of maximum scattered subspaces.
In this paper our main result is the following.

\begin{theorem}
	\label{mainthm}
	Under the action of $\mathrm{GL}(2,q^6)$ there are $\lfloor (q^2+q+1)(q-2)/2 \rfloor$ equivalence classes of maximum scattered subspaces of the form $U_b=\{(x,bx^q+x^{q^4}) : x\in \F_{q^6}\}$ in $\F_{q^6}\times \F_{q^6}$.
\end{theorem}

At first, we will find necessary and sufficient conditions for $b$ to obtain a scattered subspace. Such conditions were find independently, with different techniques, also by Polverino and Zullo in \cite{PZ}. In this paper, we will follow the same notation and approach as in \cite[Section 7]{CMPZ}. 
We have, in particular, that $U_b$ is scattered if and only if for each $m \in \mathbb{F}_{q^6}$ the $q$-polynomial $r_{m,b}(x):=mx+bx^q+x^{q^4}$, seen  as an $\F_q$-linear transformation of $\F_{q^6}$, has kernel of dimension at most $1$ over $\F_q$. This is equivalent to require that the associated Dickson matrix $M_0(m,b)$, see \eqref{Matrix_M}, has rank at least $5$. It is usually difficult to determine the rank of this matrix. In \cite{CMPZ} the authors proved that, for certain values of $b$, certain $5\times 5$ minors of $M_0(m,b)$ cannot vanish at the same time. It then followed that, for these values, $U_b$ is a scattered subspace. At that time, it was not clear whether the vanishing of some $5\times 5$ minors could imply that all of these minors vanish. Thus, while this technique was efficient to prove that certain values of $b$ yields a scattered subspace, it was not enough to find all of them. The main result of \cite{qres} implies that the vanishing of $\det M_0(m,b)$ and the vanishing of a particular $5\times 5$ minor together force $M_0(m,b)$ to have rank less than $5$. In this way, we obtain minimal, sufficient and necessary conditions for $b$ to obtain a scattered subspace. This is done in Lemma \ref{mainlemma}, where we also use the MAGMA computer algebra system to calculate the resultant of certain multivariate polynomials. The final form of these conditions is obtained separately for $q$ odd, cf. Theorem \ref{q_odd}, and for $q$ even, cf. Theorem \ref{q_even}. As it turns out in the next section, it is enough to count the number of the values of  $N=b^{q^3+1}\in \F_{q^3}$, such that $U_b$ is scattered, in order to prove Theorem \ref{mainthm}. Since our conditions on $b$ involve the elementary symmetric polynomials of $N$ and its conjugates over $\F_{q^3}$, by using Vieta's formulas, our problem can be translated to count the number of certain degree $3$ polynomials. This will be done by using double counting arguments and by counting 
precisely the number of certain $\F_q$-rational points of some quadrics defined over $\F_q$.


\section{Preliminary results}

Recall that for $m \mid n$ the norm function 
$\N_{q^n / q^m} \colon \F_{q^n} \to \F_{q^m}$ is defined as $x \mapsto x^{(q^n-1)/(q^m-1)}$. 
The known maximum scattered $\mathbb{F}_q$-subspaces of $\F_{q^n}\times \F_{q^n}$ are members of one of the following families of scattered subspaces:
\begin{itemize}
\item $U_{s}^{1,n} \ :=\ \left\{\left(x, x^{q^s}\right) \ : \ x \in \mathbb{F}_{q^n}\right\}$, $1 \leq s \leq n -1$, $\gcd(s,n)=1$ \cite{BL2000,CZ2016};

\item $U_{s,\delta}^{2,n}  \ :=\ \left\{\left(x, \delta x^{q^s}+x^{q^{n-s}}\right) \ : \ x \in \mathbb{F}_{q^n}\right\}$, $n \geq 4$, $\N_{q^n/q}(\delta) \neq 0, 1$, $q\neq 2$, $\gcd(s,n)=1$, see \cite{LP2001} for $s=1$, \cite{LTZ2018,Sheekey} for $s\neq1$;

\item $U_{s,b}^{3,n}:=\left\{\left(x, b x^{q^s}+x^{q^{s+n/2}}\right) \ : \ x \in \mathbb{F}_{q^n}\right\}$, $n \in \{6, 8\}$, $\gcd(s,n/2)=1$, $\N_{q^n/q^{n/2}}(b) \neq 0,1$, for certain choices of $b$, see \cite{CMPZ};

\item $U_c^4:=\left\{\left(x, x^q+x^{q^3}+cx^{q^5}\right) \ : \ x \in \mathbb{F}_{q^6}\right\}$, $q$ odd, $c^2+c=1$, see \cite{7,trinomgen};

\item $U_h^5:=\left\{\left(x, h^{q-1}x^q-h^{q^2-1}x^{q^2}+x^{q^4}+x^{q^5}\right) \ : \ x \in \mathbb{F}_{q^6}\right\}$, $q$ odd, $h^{q^3+1}=-1$, see \cite{newscatt,vertex}.

\end{itemize}
Note that these families may overlap, but each of them contains examples not contained in other families. The only known families of scattered subspaces with examples for each $n>1$ are $U_s^{1,n}$ and $U_{s,\delta}^{2,n}$. By \cite{10}, for $n\leq 4$ every maximum scattered subspace is a member of one of these two families. In \cite{Zanellanuovo} it was proved that the condition $\N_{q^n/q}(\delta)\neq 1$ is not only sufficient but also necessary in order to get maximum scattered subspaces of the form $U_{s,\delta}^{2,n}$. 
Very recently in \cite{nuovo} it was proved that for $n\geq 10$ the family $U_{s,b}^{3,n}$ does not contain scattered subspaces. 
A polynomial $f(x)$ so that $\{(x^{q^t}, f(x)) : x \in \F_{q^{mn}}\}$ is scattered in $\F_{q^{nm}} \times \F_{q^{nm}}$ for infinitely many $m$ is called exceptional scattered of index $t$, see \cite{BM, BZ, FM}. The known exceptional scattered polynomials are those defining $U_s^{1,n}$ and $U_{s,\delta}^{2,n}$.

In this paper we will investigate the subspaces 
\[U_{s,b}:=U_{s,b}^{3,3}=\left\{\left(x, b x^{q^s}+x^{q^{s+3}}\right) \ : \ x \in \mathbb{F}_{q^6}\right\}\]
of $\F_{q^6}\times \F_{q^6}$. 
The first part of the next result is a special case of \cite[Proposition 5.1]{CMPZ}, the second part can be proved by the same arguments:

\begin{result}
\label{res0}
Take some $b,\bar{b} \in \F_{q^6}^*$, $\N_{q^6/q^3}(b) \neq 1$, $\N_{q^6/q^3}(\bar{b})\neq 1$ and $s,\bar{s}\in \{1,2,4,5\}$. 

\noindent
The subspaces $U_{s,b}$ and $U_{\bar{s}, \bar{b}}$ are $\Gamma \mathrm{L}(2,q^6)$-equivalent if and only if
\begin{itemize}
	\item $s=\bar{s}$ and $\N_{q^6/q^3}(\bar{b})=\N_{q^6/q^3}(b)^{\sigma}$, or
	\item $s+\bar{s}=6$ and $\N_{q^6/q^3}(\bar{b})\N_{q^6/q^3}(b)^{\sigma}=1$,
\end{itemize}
for some automorphism $\sigma \in Aut(\F_{q^3})$. 

\noindent
The subspaces $U_{s,b}$ and $U_{\bar{s}, \bar{b}}$ are $\mathrm{GL}(2,q^6)$-equivalent if and only if 
\begin{itemize}
	\item $s=\bar{s}$ and $\N_{q^6/q^3}(\bar{b})=\N_{q^6/q^3}(b)$, or
	\item $s+\bar{s}=6$ and $\N_{q^6/q^3}(\bar{b})^{q^{\bar{s}}}\N_{q^6/q^3}(b)=1$.
\end{itemize}
\end{result}

Also, by the first paragraph of \cite[Section 5]{CMPZ}, for $s\in \{1,2,4,5\}$ the $\F_q$-subspaces
\begin{equation}
\label{eki}
\{(x, \alpha x^{q^s}+\beta x^{q^{s+3}}) : x \in \F_{q^6}\} \quad \mbox{ and } \quad \{(x, \alpha' x^{q^s}+\beta' x^{q^{s+3}}) : x \in \F_{q^6}\}
\end{equation}
are $\mathrm{GL}(2,q^6)$-equivalent when $\N_{q^6/q^3}(\alpha \beta')=\N_{q^6/q^3}(\alpha' \beta)$. 
It follows that $U_{4,b}$ is equivalent to $U_{1,b}$ and $U_{2,b}$ is equivalent to $U_{5,b}$. 
Also, by Result \ref{res0}, $U_{5,b}$ is equivalent to $U_{1,b^{-q^5}}$ and hence the following holds. 

\begin{lemma}
Every subspace $U_{s,b'}$, $s\in \{1,2,4,5\}$, is $\mathrm{GL}(2,q^6)$-equivalent to a subspace
\[U_{b}=U_{1,b}=\{(x,bx^q+x^{q^4}) : x \in \F_{q^6}\},\]
for some $b$.
\end{lemma}

In \cite{CMPZ} the authors proved that $U_b$ is not $\Gamma \mathrm{L}(2,q^6)$-equivalent to subspaces of the form $U_{s}^{1,6}$ or  $U_{s,\delta}^{2,6}$; cf. \cite[Theorem 6.2]{CMPZ}.
By \cite{7,trinomgen} $U_b$ is not $\Gamma \mathrm{L}(2,q^6)$-equivalent to $U_c^4$ and by 
\cite[Proposition 3.5]{newscatt} neither to $U_h^5$. 
By Result \ref{res0} two subspaces $U_{b}$ and $U_{c}$,  $\N_{{q^{6}}/{q^3}}(b)\neq 1$, $\N_{{q^{6}}/{q^3}}(c)\neq 1$, are $\mathrm{GL}(2,q^{6})$-equivalent if and only if $\N_{{q^{6}}/{q^3}}(b)=\N_{{q^{6}}/{q^3}}(c)$ and they are $\Gamma \mathrm{L}(2,q^6)$-equivalent if and only if $\N_{{q^{6}}/{q^3}}(b)=\N_{{q^{6}}/{q^3}}(c)^\sigma$, for some $\sigma \in Aut(\F_{q^3})$. 
This motivates the definition of
\[\Gamma:=\{\N_{{q^{6}}/{q^3}}(b)\ : \ U_b \mbox{ is maximum scattered}\}.\]
The previous paragraph proves the following.

\begin{lemma}
The size of $\Gamma$ is the same as the number of pairwise not $\mathrm{GL}(2,q^6)$-equivalent maximum scattered subspaces of the form $U_{b}$. 	
\end{lemma}

In \cite{CMPZ} the authors proved that $\Gamma \neq \emptyset$ and conjectured that $|\Gamma|=\lfloor (q^2+q+1)(q-2)/2 \rfloor$. The conjecture was proven there for $q \leq 32$ by using GAP. In this paper we prove it for an arbitrary prime power $q$. For $q$ odd and even see Theorems \ref{Th:Conto} and \ref{qeven:Conto}, respectively. 

\medskip

Suppose $q=p^e$ for some prime $p$. From our proof, it will turn out that $U_b$ is maximum scattered if and only if $U_{b^p}$ is maximum scattered (cf. Theorems \ref{q_odd} and \ref{q_even}). It follows that $w \in \Gamma$ if and only if $w^p \in \Gamma$. Thus, a consequence of Theorem \ref{mainthm} is the following. 

\begin{corollary}
The number of pairwise not $\Gamma \mathrm{L}(2,q^6)$-equivalent maximum scattered subspaces of the form $U_b$ is at least
\[\frac{|\Gamma|}{3e}=\frac{\lfloor (q^2+q+1)(q-2)/2 \rfloor}{3e},\]
where $q=p^e$, for some $p$ prime.
\end{corollary}


\section{The proof of the conjecture}
Here we analyse more in detail the family $U_b$ in order to prove \cite[Conjecture 7.5]{CMPZ}. Doing so, we follow the same notation and approach as in \cite[Section 7]{CMPZ}. 
We have, in particular, that $U_b$ is scattered if and only if for each $m \in \mathbb{F}_{q^6}$ the $q$-polynomial $r_{m,b}(x):=mx+bx^q+x^{q^4}$, seen  as a linear transformation of $\F_{q^6}$, has kernel of dimension at most one over $\F_q$. In the whole section we will assume $b \neq 0$. 
This is equivalent to require that the associated Dickson matrix

\begin{equation}\label{Matrix_M}
M_0(m,b):=\left(
\begin{array}{cccccc}
m&b&0&0&1&0\\
0&m^q&b^q&0&0&1\\
1&0&m^{q^2}&b^{q^2}&0&0\\
0&1&0&m^{q^3}&b^{q^3}&0\\
0&0&1&0&m^{q^4}&b^{q^4}\\
b^{q^5}&0&0&1&0&m^{q^5}\\
\end{array}
\right)
\end{equation}
has rank at least five; for different proofs see \cite[Section 2.4]{25}, \cite[Proposition 5]{Menichetti}, \cite[Proposition 4.4]{WL}. As the definition of $\Gamma$ suggests, the norm of $b$ plays a crucial role in the investigation of $U_b$; in this whole section we will abbreviate it simply as
\begin{equation}
\label{N}
N:=\N_{q^6/q^3}(b)=b^{q^3+1}\in \F_{q^3}.
\end{equation}
Note that the determinant of the $4\times4$ top-right submatrix of $M_0(m,b)$ is $b^{q^2}(N-1)^{q}$ and it does not vanish when $b\neq 0 $ and $N\neq 1$. 
In order to compute the elements in $\Gamma$ we give a precise description of the values $b$ giving rise to a maximum scattered linear set $U_{b}$. We will need the following result.

\begin{theorem}[{\cite[Theorem 1.3]{qres}}]
	\label{Th:Sottobicchiere}
	Let $M(f)$ be the $n\times n$ Dickson matrix associated with a linearized polynomial $f$ over $\mathbb{F}_{q^n}$. Denote by $M_r(f)$ the $(n-r)\times (n-r)$ submatrix of $M(f)$ obtained by considering the last $n-r$ columns and the first $n-r$ rows of $M(f)$.
	Then $\dim \ker f= t$ if and only if
	\[\det (M_0(f)) =\det (M_1(f)) =\cdots = \det (M_{t-1}(f)) =0, \textrm { and }  \det (M_{t}(f)) \neq 0.\]
\end{theorem}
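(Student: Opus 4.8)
The plan is to deduce everything from one classical identity for Dickson matrices together with an explicit description of their kernels. Write $f(x)=\sum_{i=0}^{n-1}a_ix^{q^i}$, so $M(f)$ has $(i,j)$ entry $a_{j-i}^{q^i}$ with indices read modulo $n$; set $t:=\dim_{\F_q}\ker f$. First I would record that $\mathrm{rank}_{\F_{q^n}}M(f)=n-t$: this is classical, and can be seen by extending scalars, since after the identification $\F_{q^n}\otimes_{\F_q}\F_{q^n}\cong\F_{q^n}^{\,n}$ (via $a\otimes b\mapsto(ab,a^qb,\dots,a^{q^{n-1}}b)$) the matrix $M(f)$ represents $f\otimes\mathrm{id}$, whose rank equals $\mathrm{rank}_{\F_q}f$. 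More usefully, a one-line computation shows that for $\beta\in\F_{q^n}$ the vector $v_\beta:=(\beta,\beta^q,\dots,\beta^{q^{n-1}})^{\top}$ satisfies $M(f)\,v_\beta=(f(\beta),f(\beta)^q,\dots,f(\beta)^{q^{n-1}})^{\top}$, so $v_\beta\in\ker M(f)$ iff $\beta\in\ker f$; since $\beta\mapsto v_\beta$ is additive and $\F_q$-linear and a Moore determinant $\det(\beta_i^{q^j})$ is nonzero exactly for $\F_q$-independent arguments, the vectors $v_{\beta_1},\dots,v_{\beta_t}$ attached to an $\F_q$-basis of $\ker f$ form an $\F_{q^n}$-basis of $\ker M(f)$. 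Finally a direct reindexing gives $M(f)^{\top}=M(\tilde f)$, where $\tilde f(x)=\sum_i a_i^{q^{\,n-i}}x^{q^{\,n-i}}$ is the adjoint of $f$ with respect to the trace form, and $\dim_{\F_q}\ker\tilde f=\dim_{\F_q}\ker f=t$, so the same description applies to $\ker M(f)^{\top}$ with $\tilde f$ in place of $f$.

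Next I would reduce the biconditional to a single implication. Writing $M_r=M_r(f)$ for the size-$(n-r)$ submatrix on the first $n-r$ rows and last $n-r$ columns: if $t=\dim_{\F_q}\ker f$, then $\mathrm{rank}\,M(f)=n-t$ forces all minors of size $>n-t$ to vanish, in particular $\det M_0=\dots=\det M_{t-1}=0$. Conversely, once we know
\[(\star)\qquad \dim_{\F_q}\ker f=t\ \Longrightarrow\ \det M_t(f)\neq 0,\]
the reverse direction is free: $\det M_t\neq0$ gives $\mathrm{rank}\,M(f)\ge n-t$, hence $s:=\dim_{\F_q}\ker f\le t$, and then $(\star)$ applied to $s$ yields $\det M_s\neq0$, which is incompatible with $\det M_0=\dots=\det M_{t-1}=0$ unless $s=t$. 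So the whole statement rests on $(\star)$.

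To prove $(\star)$, assume $\dim_{\F_q}\ker f=t$, put $\rho=n-t$, and fix $\F_q$-bases $\beta_1,\dots,\beta_t$ of $\ker f$ and $\gamma_1,\dots,\gamma_t$ of $\ker\tilde f$. Since $M_t$ is $M(f)$ with its last $t$ rows and first $t$ columns deleted, a short linear-algebra argument shows that $\det M_t\neq0$ is equivalent to the conjunction of: (a) columns $t,\dots,n-1$ of $M(f)$ are $\F_{q^n}$-independent; and (b) the ($\rho$-dimensional) span of those columns meets $\la e_\rho,\dots,e_{n-1}\ra$ trivially. As $\dim\ker M(f)=t$ and $t+\rho=n$, condition (a) is equivalent to $\ker M(f)$ projecting isomorphically onto the first $t$ coordinates; by the kernel description this is the non-vanishing of the Moore determinant $\det(\beta_\ell^{q^{j}})_{1\le\ell\le t,\ 0\le j\le t-1}$, which holds because $\beta_1,\dots,\beta_t$ are $\F_q$-independent. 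Dually, using $M(f)^{\top}=M(\tilde f)$ and $\mathrm{im}\,M(f)=(\ker M(f)^{\top})^{\perp}$, condition (b) becomes: $\ker M(f)^{\top}$ projects isomorphically onto the last $t$ coordinates, i.e. $\det\bigl((\gamma_\ell^{q^{\,n-t}})^{q^{i}}\bigr)_{1\le\ell\le t,\ 0\le i\le t-1}\neq0$; since $x\mapsto x^{q^{\,n-t}}$ is an $\F_q$-linear automorphism of $\F_{q^n}$, the elements $\gamma_\ell^{q^{\,n-t}}$ are again $\F_q$-independent, so this Moore determinant is nonzero as well. Hence both (a) and (b) hold and $(\star)$ follows.

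I expect the main obstacle to be the two bookkeeping translations in the last step: establishing cleanly that $\det M_t\neq 0$ is governed by the two complementary coordinate-projection conditions on $\ker M(f)$ and on $\ker M(f)^{\top}$, and then matching those with Moore determinants. Everything else — the rank formula, the explicit kernel, the identity $M(f)^{\top}=M(\tilde f)$, and the reduction of the biconditional to $(\star)$ — is routine. A possible alternative to the direct kernel computation is to run the whole argument inside $\F_{q^n}\otimes_{\F_q}\F_{q^n}\cong\prod_{g}\F_{q^n}$, where $f\otimes\mathrm{id}$ becomes block-diagonal and the submatrix claim turns into a statement about how $\ker f$ spreads across the factors.
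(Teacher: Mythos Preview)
The paper does not prove this statement; it is quoted verbatim as \cite[Theorem 1.3]{qres} and used as a black box throughout Section~2. So there is no ``paper's own proof'' to compare against.

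Your argument stands on its own and is correct. The reduction of the biconditional to the single implication $(\star)$ is clean, and the proof of $(\star)$ via the explicit description of $\ker M(f)$ and $\ker M(f)^\top=\ker M(\tilde f)$ as spans of Moore-type vectors $v_\beta=(\beta,\beta^q,\dots,\beta^{q^{n-1}})^\top$ goes through: condition~(a) becomes non-vanishing of the Moore determinant $\det(\beta_\ell^{q^j})_{0\le j\le t-1}$, and condition~(b) becomes non-vanishing of $\det((\gamma_\ell^{q^{n-t}})^{q^i})_{0\le i\le t-1}$, both guaranteed by $\F_q$-independence.

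Two places are worth tightening in a final write-up. First, the sentence ``$M(f)$ represents $f\otimes\mathrm{id}$'' deserves one explicit line: under $a\otimes b\mapsto(ab,a^qb,\dots,a^{q^{n-1}}b)$, the $k$-th output coordinate of $(f\otimes\mathrm{id})(a\otimes b)$ is $f(a)^{q^k}b=\sum_j a_{j-k}^{q^k}(a^{q^j}b)$, which is exactly row $k$ of $M(f)$ applied to the image vector. Second, in passing from (b) to the projection condition on $\ker M(f)^\top$ you implicitly use that, once (a) holds, the span of columns $t,\dots,n-1$ equals the full column space $\mathrm{im}\,M(f)$ (both have dimension $n-t$); this is true but should be stated, since only then does $\mathrm{im}\,M(f)=(\ker M(f)^\top)^\perp$ convert (b) into a statement about $\ker M(f)^\top$.
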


The cases $q$ odd and $q$ even will be analyzed separately. The next lemma will be crucial in both cases.

\begin{lemma}
\label{mainlemma}
If $N=1$, then $U_b$ is not scattered. If $N\neq 1$ and $U_b$ is not scattered, then
\begin{multline}
\label{Pol_T}
\phi_b(T):=b^{q+1}T^2 +(-b^{q^5+q^4+q^3+q^2+q+1}+2b^{q^4+q^3+q+1}-b^{q^3+1}-b^{q^4+q}+b^{q^5+q^2}) T\\
-b^{q^3+q^4}(b^{q^3+1}-1)^{1+q+q^2}\in \mathbb{F}_{q^6}[T]
\end{multline}
has a root which is a $(1+q+q^2)$-th power in $\F_{q^6}^*$.
\end{lemma}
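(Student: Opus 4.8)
The plan is to reduce the statement about non-scatteredness to a concrete condition on a root of $\phi_b$. By Theorem \ref{Th:Sottobicchiere} applied to $f = r_{m,b}$, the subspace $U_b$ fails to be scattered precisely when there is some $m \in \F_{q^6}$ for which $\dim\ker r_{m,b}\geq 2$, i.e. $\det M_0(m,b)=\det M_1(m,b)=0$. Here $M_0(m,b)$ is the $6\times 6$ matrix in \eqref{Matrix_M} and $M_1(m,b)$ is its $5\times 5$ top-right submatrix. The first observation handles $N=1$: since the $4\times 4$ top-right submatrix of $M_0(m,b)$ has determinant $b^{q^2}(N-1)^q$, this vanishes identically in $m$ when $N=1$, so $\det M_1(m,b)$ also has a factor forcing $\dim\ker$ to be large for a suitable $m$ — more precisely I would exhibit an $m$ (or argue by a rank count) for which the rank of $M_0(m,b)$ drops below $5$, giving non-scatteredness. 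I expect this case to be short.

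For $N\neq 1$, the core of the argument is an elimination computation. I would treat $\det M_0(m,b)=0$ and $\det M_1(m,b)=0$ as two polynomial equations in the single unknown $m$ (with the conjugates $m^{q},\dots,m^{q^5}$ expressed, if needed, via the relation coming from $m\in\F_{q^6}$, or kept formal and then the resultant reduced modulo $x^{q^6}-x$ — following the CMPZ approach I'd keep them as polynomials in $m$ over $\F_{q^6}$). Eliminating $m$ between these two equations produces a polynomial condition on $b$ alone; the claim is that, after simplification, this condition is equivalent to $\phi_b(T)$ having a root of the form $t^{q^2+q+1}$ for some $t\in\F_{q^6}^*$. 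Concretely, I would look for the substitution that makes the structure visible: $M_1(m,b)$ being (nearly) triangular suggests $\det M_1(m,b)$ factors, and one factor should be linear or quadratic in $m^{q^i}$; solving that for $m$ and substituting into $\det M_0(m,b)=0$ should yield, after collecting, an equation of the shape $\phi_b(T)=0$ with $T$ playing the role of $N_{q^6/q^3}$ of the kernel vector, hence a $(1+q+q^2)$-th power (this is the standard phenomenon: kernel elements ${\bf v}\neq {\bf 0}$ come in $\F_q$-lines, so the relevant invariant is a relative norm $N_{q^6/q}(v_0/v_1)=v^{(q^6-1)/(q-1)}$, and $(q^6-1)/(q-1)=(q^3-1)(q^3+q^... )$ — the exponent $1+q+q^2$ appears because of the further $\F_{q^3}$-structure).

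The main obstacle will be the elimination itself: showing that the resultant of $\det M_0$ and $\det M_1$ (with respect to $m$) collapses, after removing spurious factors such as powers of $b$ and powers of $(N-1)$, exactly to the statement that $\phi_b$ has a $(1+q+q^2)$-th power root, rather than to some weaker divisibility condition. I would organize this by: (i) writing a putative kernel vector $(v, r_{m,b}(v)=0)$ with $v\in\F_{q^6}^*$, so the two-dimensional kernel gives two independent relations; (ii) forming the relative norm $T := v^{(q^6-1)/(q^3-1)}\cdot(\text{something})$ — or more precisely tracking which symmetric function of the $\F_q$-line of kernel vectors is a $(1+q+q^2)$-th power — and rewriting the kernel relations in terms of $T$ and $b$; (iii) checking that eliminating everything but $T$ and $b$ gives $\phi_b(T)=0$; (iv) conversely, that any $(1+q+q^2)$-th power root $T=t^{1+q+q^2}$ of $\phi_b$ reconstructs a genuine $2$-dimensional kernel, i.e. produces $m$ with $\dim\ker r_{m,b}\geq 2$. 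Step (iv), the converse direction, is where one must be careful that the reconstructed $m$ lies in $\F_{q^6}$ and that the kernel is genuinely $\geq 2$-dimensional and not accidentally smaller; I expect the $(1+q+q^2)$-th power hypothesis to be exactly what is needed to make this reconstruction work. The bookkeeping of $q$-power exponents in $\phi_b$ — note the exponents $q^5{+}q^4{+}q^3{+}q^2{+}q{+}1$, $q^4{+}q^3{+}q{+}1$, etc. are all norms or partial norms of $b$ — should be done by recognizing $N = b^{q^3+1}$ and its conjugates as the real variables, reducing $\phi_b$ to a $2$-variable ($N$, $N^q$, $N^{q^2}$) identity that can be verified directly.
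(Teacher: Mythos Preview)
Your overall framework --- translating non-scatteredness into $\det M_0(m,b)=\det M_1(m,b)=0$ via Theorem~\ref{Th:Sottobicchiere} and then eliminating $m$ --- matches the paper. But several of your proposed steps would lead you off course, and one is superfluous.

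First, the lemma is a one-way implication: if $U_b$ is not scattered (and $N\neq 1$), then $\phi_b$ has a $(1+q+q^2)$-th-power root. Your step~(iv), reconstructing a $2$-dimensional kernel from such a root, is the converse; it is not part of this lemma (that direction is handled later, in Theorems~\ref{q_odd} and~\ref{q_even}, by direct substitution). Drop it.

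Second, the mechanism producing the $(1+q+q^2)$-th power is far more direct than a relative-norm-of-kernel-vector argument, and your speculation there would not pan out. The paper treats $m,m^q,\ldots,m^{q^5}$ as independent formal variables $X,Y,Z,U,V,W$ and writes $\det M_0=G(X,\ldots,W)$, $\det M_1=F_0(X,\ldots,W)$. Since $F_0(m,m^q,\ldots)=0$ forces its Frobenius conjugates $F_1,F_2$ (same expression with the $b^{q^i}$ and the variables cyclically shifted) to vanish as well, one has four equations in six formal unknowns. A single nested resultant eliminating $X,Y,Z$ collapses to $-UV\cdot \phi_b(UVW)^2$. Hence if both determinants vanish with $m\neq 0$, then $\phi_b(m^{q^3+q^4+q^5})=0$, and $m^{q^3+q^4+q^5}=(m^{q^3})^{1+q+q^2}$ is visibly a $(1+q+q^2)$-th power. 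The power structure comes purely from which three variables survive the elimination; no kernel vector or norm is involved.

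Third, for $N=1$: the vanishing of the $4\times 4$ top-right minor does \emph{not} by itself force $\dim\ker\geq 2$ for some $m$ --- that minor governs the rank only once $\det M_0$ and $\det M_1$ already vanish. The clean move is to set $m=0$: then $\det M_0=-(N-1)^{1+q+q^2}$ and $\det M_1=b^{q^2}(N-1)^{q+1}$, both zero exactly when $N=1$. This same computation disposes of the residual case $m=0$ in the elimination when $N\neq 1$: there $m=0$ is excluded, and since the constant term of $\phi_b$ is $-b^{q^3+q^4}(N-1)^{1+q+q^2}\neq 0$, the root $m^{q^3(1+q+q^2)}$ lies in $\F_{q^6}^*$ as required.
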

\begin{proof}
	By Theorem \ref{Th:Sottobicchiere}, $U_b$ is not scattered if and only if there exists $m\in \F_{q^6}$ such that
\begin{equation}
\label{diki}
	\det\left(
	\begin{array}{cccccc}
	m&b&0&0&1&0\\
	0&m^q&b^q&0&0&1\\
	1&0&m^{q^2}&b^{q^2}&0&0\\
	0&1&0&m^{q^3}&b^{q^3}&0\\
	0&0&1&0&m^{q^4}&b^{q^4}\\
	b^{q^5}&0&0&1&0&m^{q^5}\\
	\end{array}
	\right)=\det\left(
	\begin{array}{cccccc}
	b&0&0&1&0\\
	m^q&b^q&0&0&1\\
	0&m^{q^2}&b^{q^2}&0&0\\
	1&0&m^{q^3}&b^{q^3}&0\\
	0&1&0&m^{q^4}&b^{q^4}\\
	\end{array}
	\right)=0.
\end{equation}
The first condition is $G(m,m^q,m^{q^2},m^{q^3},m^{q^4},m^{q^5})=0$, where
\begin{eqnarray*}
G(X,Y,Z,U,V,W)&:=& X Y Z U V W+Y U W+b^{q+1} U V W +X Z V+b^{q^5+1} Z U V \\\nonumber
&&+b^{q^2+q} X V W +b^{q^5+q^4} Y Z U  +b^{q^4+q^3} X Y Z +b^{q^3+q^2}X Y W\\\nonumber
&&-b^{q^5+q^4+q^3+q^2+q+1}+b^{q^5+q^4+q^2+q}+b^{q^4+q^3+q+1}+b^{q^5+q^3+q^2+1}\nonumber\\
&&-b^{q^4+q}-b^{q^3+1}-b^{q^5+q^2}+1,
\end{eqnarray*}
while the second condition is 
\begin{equation}
\label{F0}
F_0(m,m^q,m^{q^2},m^{q^3},m^{q^4},m^{q^5})=0,
\end{equation}
where
	\begin{eqnarray*}
		F_0(X,Y,Z,U,V,W):=-b^{q^4}Y Z U  - b Z U V  + b^{q^4+q^3+q^2+q+1}- b^{q^3+q^2+1}- b^{q^4+q^2+q} + b^{q^2}.
	\end{eqnarray*}
Put
	\begin{eqnarray*}
	F_1(X,Y,Z,U,V,W)&:=&-b^{q^5}Z U V  - b^q U V W  + b^{q^5+q^4+q^3+q^2+q}- b^{q^4+q^3+q}- b^{q^5+q^3+q^2} + b^{q^3},\\
	F_2(X,Y,Z,U,V,W)&:=&-bUVW - b^{q^2}  X V W + b^{q^5+q^4+q^3+q^2+1}- b^{q^5+q^4+q^2}- b^{q^4+q^3+1} + b^{q^4}.
\end{eqnarray*}
Taking $q$-th power and $q^2$-th power of equation \eqref{F0} one obtains 
\begin{equation}
\label{F1}
F_1(m,m^q,m^{q^2},m^{q^3},m^{q^4},m^{q^5})=0
\end{equation}
and
\begin{equation}
\label{F2}
F_2(m,m^q,m^{q^2},m^{q^3},m^{q^4},m^{q^5})=0,
\end{equation}
respectively. Thus \eqref{F0}, \eqref{F1} and \eqref{F2} are pairwise equivalent.

Next, we eliminate $X,Y,Z$ from the above equations. For two multivariate polynomials $f,g\in \overline{\F_q}[X_1,\ldots,X_n]$ denote by $\mathrm{Res}_{X_i}(f,g)$ the resultant of $f$ and $g$ considered as polynomials in $X_i$. Then, with the help of MAGMA, or any other computer algebra system, one can verify that
	\[\mathrm{Res}_Z( \mathrm{Res}_Y ( \mathrm{Res}_X(G,F_0), \mathrm{Res}_X(G,F_1)), 
	\mathrm{Res}_Y ( \mathrm{Res}_X(G,F_0), \mathrm{Res}_X(G,F_2) )  )=-UV \phi_b(UVW)^2.\]
It follows that \eqref{diki} can hold only if $m=0$ or $\phi_b(m^{q^3+q^4+q^5})=0$.

If $m=0$, then \eqref{diki} reads $-(N-1)^{1+q+q^2}=0$ and $b^{q^2}(N-1)^{q+1}=0$; which holds if and only if $N=1$. If $N\neq 1$, then $0$ is not a root of $\phi_b$ and hence $\phi_b(m^{q^3(1+q+q^2)})=0$
only if $\phi_b$ has a root which is a $(1+q+q^2)$-th power in $\F_{q^6}^*$. 
%
\end{proof}

\medskip

\subsection{The $q$ odd case}

In this section we will always assume $q$ odd. First we give sufficient and necessary conditions on $b$ so that $U_b$ is maximum scattered. This condition is equivalent to the one obtained in \cite[Theorem 7.3]{PZ} with different techniques.


The discriminant $\Delta_b$ of $\phi_b(T)$ will play a crucial role in our investigation:
\begin{eqnarray*}
\Delta_b&:=&\left(b^{q^5+q^4+q^3+q^2+q+1}\right)^2  - 2b^{q^5+q^4+2q^3+q^2+q+2}  
-2b^{q^5+2q^4+q^3+q^2+2q+1}\\
 &&- 2b^{2q^5+q^4+q^3+2q^2+q+1}+ b^{2q^3+2} + b^{2q^5+2q^2}+ b^{2q^4+2q}\\
 &&  + 8b^{q^5+q^4+q^3+q^2+q+1}-2b^{q^4+q^3+q+1}-2b^{q^5+q^3+q^2+1}-2b^{q^5+q^4+q^2+q} \\
&=&N^{2(q^2+q+1)}-2(N^{q^2+q+2}+N^{q^2+2q+1}+N^{2q^2+q+1})+N^2+N^{2q}+N^{2q^2}\\
&&+8N^{q^2+q+1}-2(N^{q+1}+N^{q^2+q}+N^{q^2+1}).
\end{eqnarray*}
Note that $\Delta_b$ belongs to $\mathbb{F}_q$.

\begin{proposition}\label{Prop:delta}
There is a root of $\phi_b(T)$ which is a $(q^2+q+1)$-th power in $\mathbb{F}_{q^6}$ if and only if one of the following conditions holds:
\begin{itemize}
\item $\Delta_b=0$,
\item $\Delta_b$ is a square in $\mathbb{F}_q^*$ and $N\in \mathbb{F}_q$,
\item $\Delta_b$ is a non-square in $\mathbb{F}_q$.
\end{itemize}
In all cases, each of the roots of $\phi_b(T)$ are $(1+q+q^2)$-th powers. 
\end{proposition}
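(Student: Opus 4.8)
The plan is to reinterpret ``$\phi_b$ has a root which is a $(q^2+q+1)$-th power in $\F_{q^6}^*$'' as a condition on the relative norm $\N_{q^6/q^3}$, and then settle it by a short case analysis governed by the nature of $\Delta_b$. Throughout I keep the standing assumptions $b\neq0$ and $N\neq1$.

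\emph{A norm criterion.} Since $q^2+q+1\mid q^3-1\mid q^6-1$, the $(q^2+q+1)$-th powers of $\F_{q^6}^*$ form the unique subgroup of order $(q^6-1)/(q^2+q+1)=(q^3+1)(q-1)$; in particular it contains $\F_q^*$. Moreover $z^{(q^6-1)/(q^2+q+1)}=(z^{q^3+1})^{q-1}$ for every $z\in\F_{q^6}^*$, and the solutions of $w^{q-1}=1$ in $\F_{q^3}^*$ are precisely the elements of $\F_q^*$; hence $z\in\F_{q^6}^*$ is a $(q^2+q+1)$-th power if and only if $\N_{q^6/q^3}(z)=z^{q^3+1}\in\F_q$.

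\emph{Normalizing $\phi_b$.} Rewriting each monomial $b^{\alpha}$ occurring in the coefficients of $\phi_b$ with $(q^3+1)\mid\alpha$ as a power of $N=b^{q^3+1}$, the constant term of $\phi_b$ becomes $-b^{q^3(q+1)}\N_{q^3/q}(N-1)$ and the linear coefficient simplifies to $c_1:=-\N_{q^3/q}(N)+2N^{q+1}-N-N^q+N^{q^2}\in\F_{q^3}$. Substituting $T=S/b^{q+1}$ and clearing denominators transforms $\phi_b(T)=0$ into $\psi(S):=S^2+c_1S-N^{q+1}\N_{q^3/q}(N-1)=0$, a monic quadratic over $\F_{q^3}$ whose discriminant equals $\Delta_b$. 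Its roots lie in $\F_{q^6}$, hence so do the roots $T=S/b^{q+1}$ of $\phi_b$, and they are nonzero because $N\neq1$. As $\N_{q^6/q^3}(b^{q+1})=N^{q+1}$, the criterion above reads: a root $T$ of $\phi_b$ is a $(q^2+q+1)$-th power if and only if $\N_{q^6/q^3}(S)/N^{q+1}\in\F_q$, where $S=b^{q+1}T$.

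\emph{Case analysis.} First I note that the product of the two roots of $\phi_b$ equals $-(b^{q^2-1})^{q^2+q+1}\N_{q^3/q}(N-1)$, i.e.\ a $(q^2+q+1)$-th power times a nonzero element of $\F_q$; by the norm criterion one root of $\phi_b$ is a $(q^2+q+1)$-th power if and only if the other is, which will yield the final assertion of the statement once existence is established. Then I split according to $\Delta_b\in\F_q$:
\begin{itemize}
\item if $\Delta_b=0$, then $\psi$ has the double root $S=-c_1/2\in\F_{q^3}$, so $\N_{q^6/q^3}(S)=S^2$ equals the (repeated) product of the roots of $\psi$, namely $-N^{q+1}\N_{q^3/q}(N-1)$; hence $\N_{q^6/q^3}(S)/N^{q+1}=-\N_{q^3/q}(N-1)\in\F_q$ and $T$ is a $(q^2+q+1)$-th power;
\item if $\Delta_b$ is a non-square in $\F_q$, then (as $[\F_{q^3}:\F_q]=3$ is odd) it is a non-square in $\F_{q^3}$ too, so $\psi$ is irreducible over $\F_{q^3}$ and $S^{q^3}$ is its conjugate root; then $\N_{q^6/q^3}(S)=S\cdot S^{q^3}$ is the product of the roots of $\psi$, again $-N^{q+1}\N_{q^3/q}(N-1)$, and $T$ is a $(q^2+q+1)$-th power;
\item if $\Delta_b$ is a nonzero square in $\F_q$, then $\psi$ has two distinct roots $S_1,S_2\in\F_{q^3}$, and for a root $S$ one has $\N_{q^6/q^3}(S)=S^2$; thus $T$ is a $(q^2+q+1)$-th power iff $S^2/N^{q+1}\in\F_q$, which, after applying $x\mapsto x^q$ and using $(S_1S_2)^{q-1}=N^{q^2-1}$, is equivalent to $S_1^{q-1}=S_2^{q-1}$, i.e.\ to $S_1/S_2\in\F_q$. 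Writing $S_{1,2}=(-c_1\pm\epsilon)/2$ with $\epsilon\in\F_q^*$ and $\epsilon^2=\Delta_b$, one finds $S_1^qS_2-S_1S_2^q=\tfrac{1}{2}\epsilon(c_1^q-c_1)$, so $S_1/S_2\in\F_q$ exactly when $c_1\in\F_q$; and the identity $c_1^q-c_1=2(N^q-1)(N^{q^2}-N)$ shows that, as $N\neq1$, this happens precisely when $N\in\F_q$. Hence in this case $T$ is a $(q^2+q+1)$-th power iff $N\in\F_q$.
\end{itemize}
Collecting the three cases reproduces exactly the trichotomy of the statement, and in every affirmative case the product remark shows that in fact \emph{both} roots of $\phi_b$ are $(q^2+q+1)$-th powers.

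The only genuinely computational steps are the simplification of $c_1$ to the displayed $\F_{q^3}$-expression (equivalently, re-deriving $\Delta_b\in\F_q$, already recorded above) and the identity $c_1^q-c_1=2(N^q-1)(N^{q^2}-N)$. I expect the $c_1$-simplification to be the main obstacle, as it requires matching every mixed monomial $b^{\alpha}$ against an appropriate power of $N$; the remaining arguments are routine norm bookkeeping and Vieta's formulas for $\psi$.
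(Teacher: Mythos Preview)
Your argument is correct and considerably cleaner than the paper's. Both proofs start from the same norm criterion---$t$ is a $(q^2+q+1)$-th power in $\F_{q^6}^*$ if and only if $t^{q^3+1}\in\F_q$---but then diverge. The paper keeps $t=(-B+\delta)/(2A)$ in its original form, expands the condition $t^{q^4+q}=t^{q^3+1}$ into an explicit polynomial identity in $\delta$ with five coefficients $A_1,\dots,A_5$ depending on $b$, and then verifies each case by direct (computer-assisted) simplification of these expressions. Your substitution $T=S/b^{q+1}$ turns $\phi_b$ into a monic quadratic $\psi$ over $\F_{q^3}$, after which the three cases are handled structurally: for $\Delta_b=0$ or $\Delta_b$ a non-square, $\N_{q^6/q^3}(S)$ is forced to equal the constant term of $\psi$ by Vieta, giving $\N_{q^6/q^3}(T)\in\F_q$ for free; for $\Delta_b$ a nonzero square you reduce to whether $S_1/S_2\in\F_q$ and settle this via the short identity $c_1^q-c_1=2(N^q-1)(N^{q^2}-N)$. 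Your product-of-roots observation (that $C/A$ is itself a $(q^2+q+1)$-th power) also yields the ``both or neither'' conclusion more transparently than the paper's case-by-case verification. The trade-off is that the paper's brute-force expansion, while heavier, required no insight beyond setting up Equation~(\ref{Eq:t}); your route needs the normalization $T\mapsto S$ and the factorization of $c_1^q-c_1$, but once those are in hand the rest is genuine bookkeeping rather than symbolic computation.
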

\begin{proof}
Let $\delta\in \mathbb{F}_{q^2}$ be such that $\delta^2=\Delta_b$. Consider a root $t$ of $\phi_b(T)$, namely 
$$t=\frac{-B+\delta}{2A},$$
where
\begin{eqnarray}\label{Eq:AB}
A&=&b^{q+1},\\\nonumber
B &=& - b^{q^5+q^4+q^3+q^2+q+1}+2b^{q^4+q^3+q+1}-b^{q^3+1}-b^{q^4+q}+b^{q^5+q^2}.
\end{eqnarray}
We have that $t$ is a $(q^2+q+1)$-th power in $\mathbb{F}_{q^6}$ if and only if $t=0$ or 
$t^{(q-1)(q^3+1)}=1$, equivalently, when $t^{q^4+q}=t^{q^3+1}$, that is, by \eqref{Eq:AB}:
\begin{equation}\label{Eq:t}
A_1\delta^{q^3+1}+A_2(\delta+\delta^{q^3})+A_3 (\delta^{q}+\delta^{q^4})+A_4 \delta^{q^4+q}+A_5=0,
\end{equation}
where
\begin{eqnarray*}
A_1&=&b^{q^5+q^2},\\
A_2&=&b^{2q^5+q^4+q^3+2q^2+q+1} - 2b^{q^5+q^4+q^3+q^2+q+1} + b^{q^5+q^3+q^2+1} + b^{q^5+q^4+q^2+q} -b^{2q^5+2q^2},\\
A_3&=&b^{q^5+q^4+2q^3+q^2+q+2}  + b^{2q^3+2} + 2b^{q^5+q^4+q^3+q^2+q+1} - b^{q^4+q^3+q+1} - b^{q^5+q^3+q^2+1},\\
A_4&=&-b^{q^3+1},\\
A_5&=& -b^{2q^5+2q^4+3q^3+2q^2+2q+3}  + 2b^{q^5+q^4+3q^3+q^2+q+3}- b^{3q^3+3} + b^{3q^5+2q^4+2q^3+3q^2+2q+2}\\
      &&+ 2b^{q^5+2q^4+2q^3+q^2+2q+2} -8b^{q^5+q^4+2q^3+q^2+q+2} + 2b^{q^4+2q^3+q+2} + 3b^{q^5+2q^3+q^2+2} \\
      &&-2b^{2q^5+2q^4+q^3+2q^2+2q+1} - b^{2q^4+q^3+2q+1} -2b^{3q^5+q^4+q^3+3q^2+q+1} + 8b^{2q^5+q^4+q^3+2q^2+q+1}\\
      &&- 3b^{2q^5+q^3+2q^2+1} + b^{q^5+2q^4+q^2+2q}- 2b^{2q^5+q^4+2q^2+q} + b^{3q^5+3q^2}.
\end{eqnarray*}

We distinguish three cases:
\begin{itemize}
\item $\Delta_b=0$. In this case $t=-B/(2A)$ and direct computations show $t^{q^4+q}=t^{q^3+1}$. Hence $t$ is a $(q^2+q+1)$-th power in $\mathbb{F}_{q^6}$.
\item $\Delta_b$ is a square in $\mathbb{F}_q^*$. This means that $\delta^{q^i}=\delta$ for each $i=1,\ldots,5$. Recalling that $\delta^2=\Delta_b$, one gets from \eqref{Eq:t}
\[2\delta(N^{q^2}-N)(-N+N^q-N^{q^2}+N^{1+q+q^2}+\delta)=0.\]
Then either $N\in \mathbb{F}_{q^2}$ or $-N+N^q-N^{q^2}+N^{1+q+q^2}\in \F_q$.
Since $N\in \F_{q^3}$, $N\in \F_q$ follows in both cases. Then $b^{q^5}=b^{q^3+1}/b^{q^2}$ and  $b^{q^4}=b^{q^3+1}/b^q$. Now, $\Delta_b$ reads 
$N^2(N-1)^3(N+3)$. Let $z\in \mathbb{F}_q^*$ be such that $z^2=(N-1)(N+3)$, so that 
$t=(N(N-1)^2+N(N-1)z)/(2b^{q+1})$. It can be verified that $t^{q^4+q}=t^{q^3+1}$ in this case.


\item $\Delta_b$ is not a square in $\mathbb{F}_q^*$. This yields $\delta^{q}=-\delta$ and \eqref{Eq:t} reads
\[-A_1 \delta^2-A_4\delta^2+A_5=-\Delta_b(A_1+A_4)+A_5=0.\]
\end{itemize}

\end{proof}

\begin{theorem}
\label{q_odd}
$U_b$ is  a maximum scattered $\mathbb{F}_q$-subspace of $\mathbb{F}_{q^6} \times \mathbb{F}_{q^6}$ if and only if $\Delta_b$  is a square in $\mathbb{F}_q^*$.
\end{theorem}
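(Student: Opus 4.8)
\emph{Strategy.} Since $U_b$ is the graph of an additive map $\mathbb{F}_{q^6}\to\mathbb{F}_{q^6}$, it has $\mathbb{F}_q$-dimension $6=\tfrac12\dim_{\mathbb{F}_q}(\mathbb{F}_{q^6}\times\mathbb{F}_{q^6})$, so for $U_b$ the words ``scattered'' and ``maximum scattered'' are synonymous. By the remarks opening this section, $U_b$ is scattered exactly when every $r_{m,b}$ has $\mathbb{F}_q$-kernel of dimension at most $1$, which by Theorem~\ref{Th:Sottobicchiere} (with $t=2$) means that no $m\in\mathbb{F}_{q^6}$ makes both determinants in~\eqref{diki} vanish, i.e.\ no $m$ with $G=F_0=0$ at $(m,m^q,\dots,m^{q^5})$. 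Note that $\Delta_b$ depends on $N$ only, and by Result~\ref{res0} so does the scatteredness of $U_b$; thus the theorem is really an assertion about $N$. Lemma~\ref{mainlemma} disposes of $N=1$ (where also $\Delta_b=0$, as one sees by setting $N=1$ in the expression for $\Delta_b$ in terms of $N$), and for $N\neq1$ reduces non-scatteredness to ``$\phi_b$ has a root that is a $(q^2+q+1)$-th power of $\mathbb{F}_{q^6}^*$'', which Proposition~\ref{Prop:delta} rephrases as: $\Delta_b=0$; or $\Delta_b\in(\mathbb{F}_q^*)^2$ and $N\in\mathbb{F}_q$; or $\Delta_b$ a non-square. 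From now on $N\neq1$.

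\emph{The easy implication.} If $\Delta_b\in(\mathbb{F}_q^*)^2$ and $N\notin\mathbb{F}_q$, then none of the three conditions of Proposition~\ref{Prop:delta} can hold, so $\phi_b$ has no $(q^2+q+1)$-th power root and Lemma~\ref{mainlemma} yields that $U_b$ is scattered.

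\emph{The three remaining cases.} It remains to prove: (I)~$\Delta_b=0\Rightarrow U_b$ not scattered; (II)~$\Delta_b$ a non-square $\Rightarrow U_b$ not scattered; (III)~$\Delta_b\in(\mathbb{F}_q^*)^2$ and $N\in\mathbb{F}_q\Rightarrow U_b$ scattered. Together with the previous paragraph, (I)--(III) give the equivalence. For (I) and (II) the plan is to exhibit a witness $m$ of non-scatteredness: take a root $t$ of $\phi_b$ — the double root $t=-B/(2A)$ in case (I), one of $t=(-B\pm\delta)/(2A)$ with $\delta^2=\Delta_b$ and $\delta^q=-\delta$ in case (II), $A$ and $B$ as in~\eqref{Eq:AB}. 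By Proposition~\ref{Prop:delta} this $t$ is a $(q^2+q+1)$-th power, hence $t=m^{q^3+q^4+q^5}$ for suitable $m\in\mathbb{F}_{q^6}^*$; guided by the resultant identity in the proof of Lemma~\ref{mainlemma} one writes such an $m$ down explicitly as a rational expression in $b$ and $\delta$, and checks by direct substitution that $F_0=0$ and $G=0$ there, so $\dim_{\mathbb{F}_q}\ker r_{m,b}\ge2$. For (III) one substitutes $N\in\mathbb{F}_q$ (so $b^{q^4}=N\,b^{-q}$ and $b^{q^5}=N\,b^{-q^2}$), which collapses $\Delta_b$ to $N^2(N-1)^3(N+3)$; assuming this is a nonzero square, one shows that $G=F_0=0$ has no solution $m\in\mathbb{F}_{q^6}$ — every candidate forced by a root of $\phi_b$ fails $G=0$ or $F_0=0$ — so each $r_{m,b}$ has at most one-dimensional kernel and $U_b$ is scattered.

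\emph{Main obstacle.} The delicate point is that Lemma~\ref{mainlemma} provides only a \emph{necessary} condition: the resultant elimination there runs in one direction, and case (III) shows it is genuinely \emph{not} sufficient — $\phi_b$ can have a $(q^2+q+1)$-th power root while $U_b$ is scattered. Hence the core of the argument is the converse bookkeeping in (I)--(III): reconstructing a genuine $m\in\mathbb{F}_{q^6}$ from a root of $\phi_b$ and either confirming that it yields a two-dimensional kernel (cases (I), (II)) or ruling out any such $m$ (case (III)). Concretely this is a chain of manipulations with norms and Frobenius conjugates in $\mathbb{F}_{q^6}$, most transparently organized around $N$ and the auxiliary data $A$, $B$, $\delta$, and is the step for which computer algebra is the natural tool — cf.\ the code accompanying Proposition~\ref{Prop:delta}.
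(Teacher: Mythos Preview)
Your overall architecture matches the paper exactly: reduce to Lemma~\ref{mainlemma} and Proposition~\ref{Prop:delta}, then split into the four cases $\Delta_b\in(\mathbb{F}_q^*)^2$ with $N\notin\mathbb{F}_q$; $\Delta_b=0$; $\Delta_b\in(\mathbb{F}_q^*)^2$ with $N\in\mathbb{F}_q$; and $\Delta_b$ a non-square. The easy case and the logical flow for cases (I)--(III) are correctly identified.

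However, the mechanism you propose for carrying out (I)--(III) is not the one that works, and the missing idea is the real content of the proof. You write that one ``writes such an $m$ down explicitly as a rational expression in $b$ and $\delta$'' and then substitutes. There is no such explicit $m$: for a fixed $t$ there are many $m\in\mathbb{F}_{q^6}^*$ with $m^{q^3+q^4+q^5}=t$, and $G$, $F_0$ involve all six Frobenius conjugates of $m$, not just a single cyclic product. What the paper does instead is the key step you do not mention: it shows that $\det M_0(m,b)$ and $\det M_1(m,b)$ can be rewritten as rational expressions in $t$ and its Frobenius conjugates alone (equations \eqref{multi1} and \eqref{multi2}), using identities such as $m^{1+q^2+q^4}=t^{q^3+q^5}/t^{q^4}$. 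Once this is done, every case becomes a straightforward substitution of the explicit root $t$ of $\phi_b$ into \eqref{multi1} and \eqref{multi2}: in cases (I) and (II) both expressions vanish, giving a genuine witness $m$; in case (III) one computes $\det M_1(m,b)=-b^{-q^5}(N-1)Nz\neq0$, so no $m$ attached to either root of $\phi_b$ can make both determinants vanish, and since Lemma~\ref{mainlemma} forces any bad $m$ to lie above a root of $\phi_b$, $U_b$ is scattered. Your case (III) sketch (``every candidate forced by a root of $\phi_b$ fails'') is the right shape, but without \eqref{multi1}--\eqref{multi2} you have no way to evaluate $G$ or $F_0$ at an unspecified $m$, so the argument as written has a gap.
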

\begin{proof}
By Lemma \ref{mainlemma}, if $\phi_b$ has no roots which are $(1+q+q^2)$-th powers in $\F_{q^6}$ then $U_b$ is scattered. On the other hand, if $\phi_b(t)=0$ for some $t=m^{q^3+q^4+q^5}$ where $m\in \F_{q^6}^*$, then 
$m^{q^i+q^{i+1}+q^{i+2}}=t^{q^{i+3}}$ for $i=0,1,\ldots,5$ and also
\[m^{1+q^2+q^4}=\frac{m^{1+q+q^2}m^{q^2+q^3+q^4}}{m^{q+q^2+q^3}}=\frac{t^{q^3+q^5}}{t^{q^4}} \qquad \mbox{ and} \qquad m^{q+q^3+q^5}=\frac{t^{q^4+1}}{t^{q^5}}.\]
In this case, by defining $G$ and $F_0$ as in Lemma \ref{mainlemma} and by using the expressions above, one can substitute the powers of $m$ with the corresponding powers of $t$ as follows:
\begin{eqnarray}
\label{multi1}
\det M_0(m,b) &=&G(m,m^q,m^{q^2},m^{q^3},m^{q^4},m^{q^5})=t^{-q^5-q^4} (b^{q+1} t^{q^5+q^4+1}-b^{q^4+q}t^{q^5+q^4}\\
&&+b^{q^5+1}t^{2q^5+q^4}+b^{q^5+q^4}t^{q^5+2q^4}-b^{q^3+1}t^{q^5+q^4}+b^{q^4+q^3}t^{q^5+q^4+q^3}\nonumber\\
&&+b^{q^4+q^3+q+1}t^{q^5+q^4}-b^{q^5+q^2}t^{q^5+q^4}+b^{q^5+q^4+q^2+q}t^{q^5+q^4}\nonumber\\
&&+b^{q^2+q}t^{q^5+q^4+q}+b^{q^3+q^2}t^{q^2+q^4+q^5}+b^{q^5+q^3+q^2+1}t^{q^5+q^4}\nonumber\\
&&-b^{q^5+q^4+q^3+q^2+q+1}t^{q^5+q^4}+t^{2q^4+1}+t^{q^5+q^4}+t^{2q^5+q^3}+t^{q^5+q^4+q^3+1}),\nonumber\\
\label{multi2}
\det M_1(m,b)&=&F_0(m,m^q,m^{q^2},m^{q^3},m^{q^4},m^{q^5})\\
&=&-b^{q^4+q^2+q}-b^{q^3+q^2+1}+b^{q^4+q^3+q^2+q+1}-b^{q^4} t^{q^4}+b^{q^2}-b t^{q^5}.\nonumber
\end{eqnarray}

By Proposition \ref{Prop:delta}, it is convenient to distinguish four cases:
\begin{itemize}

\item $\Delta_b$ is a square in $\mathbb{F}_q^*$ and $N\notin \mathbb{F}_q$. 
By Proposition \ref{Prop:delta} the roots of $\phi_b(T)$ are not $(q^2+q+1)$-th powers and hence $U_b$ is scattered. 
    \item $\Delta_b=0$. The solution of $\phi_b(T)=0$ is $t=-B/(2A)$, where $A,B$ are as in \eqref{Eq:AB}. By Proposition \ref{Prop:delta}, $t=0$ and hence $N=1$, thus either $U_b$ is not scattered, or we can write $t=m^{q^3+q^4+q^5}$ for some $m\in \F_{q^6}^*$. In the latter case, 
    by substituting $t=-B/(2A)$ in \eqref{multi1} we get zero. 
 Substituting $t=-B/(2A)$ in \eqref{multi2} we get a quantity divisible by $\Delta_b^2$, and hence again zero. It follows that $U_{b}$ is not scattered.

    \item $\Delta_b$ is a square in $\mathbb{F}_q^*$ and $N\in \mathbb{F}_{q}$. In this case a root $t$ of $\phi_b(T)$ can be written as
   \begin{equation}
   \label{faszom}
    t=\frac{N(N-1)^2+N(N-1)z}{2b^{q+1}},
   \end{equation}
    where $z\in \mathbb{F}_q^*$ satisfies $z^2=(N-1)(N+3)$. 
Then substituting \eqref{faszom} in \eqref{multi2} and recalling that $N\in \F_q$, we get
\[\det M_1(m,b)=-b^{-q^5}(N-1)Nz,\]
which is non-zero since $N=1$ would give $\Delta_b=0$. 
It follows that $U_b$ is scattered.
    \item $\Delta_b$ is not a square in $\mathbb{F}_q$. In this case a root $t$ of $\phi_b(T)$ can be written as
    \begin{equation}
    \label{alszom}
    t=\frac{-B+\delta}{2b^{q+1}},
    \end{equation}    
    where $\delta^2=\Delta_b$ and $\delta^q=-\delta$. Note that $t\neq 0$, since then we would have $N=1$ and hence $\Delta_b=0$. Therefore, by Proposition \ref{Prop:delta} we can write $t=m^{q^3+q^4+q^5}$ for some $m\in \F_{q^6}^*$. Then by substituting \eqref{alszom} in \eqref{multi1} and \eqref{multi2} we obtain $\det M_0(m,b) = \det M_1(m,b)=0$ and thus $U_b$ is not scattered.
\end{itemize}
\end{proof}

Next we consider the condition $\Delta_b$ being a square in $\mathbb{F}_q^*$. It will be useful to write $\Delta_b$ as
\begin{equation}\label{Delta_b}
\Delta_b=(N^{q^2+q+1}-N-N^q-N^{q^2})^2+8N^{q^2+q+1}-4(N^{q+1}+N^{q^2+1}+N^{q^2+q}).
\end{equation}


In what follows we will count the exact number of norms $N$ such that $\Delta_b$ is a square in $\mathbb{F}_q^*$.


Note that each $N \in \mathbb{F}_{q^3}\setminus \mathbb{F}_q$ is a root of multiplicity one of the irreducible polynomial 
\begin{equation}
\label{Star0}
T^3-(N+N^q+N^{q^2})T^2+(N^{q+1}+N^{q^2+q}+N^{q^2+1})T-N^{q^2+q+1}\in \F_q[T].
\end{equation}
This fact leads us to consider polynomials
\[F(T):=T^3-ST^2+RT-P\in \F_q[T]\]
satisfying 
\begin{equation}
\label{Eq:Star}
(S-P)^2+8P-4R=U^2  \mbox{ for some } U\in \F_q^*.
\end{equation}

\begin{proposition}
\label{basic}
The number of norms $N\in \F_{q^3} \setminus \F_q$ such that $\Delta_b$ is a square in $\mathbb{F}_q^*$ is three times the number of irreducible polynomials $F(T)=T^3-ST^2+RT-P\in \mathbb{F}_q[T]$ satisfying \eqref{Eq:Star}.
\end{proposition}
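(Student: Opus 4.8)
The plan is to establish a $3$-to-$1$ correspondence between the set $\mathcal N$ of norms $N\in\F_{q^3}\setminus\F_q$ for which $\Delta_b$ is a nonzero square in $\F_q$ and the set $\mathcal F$ of irreducible polynomials $F(T)=T^3-ST^2+RT-P\in\F_q[T]$ satisfying \eqref{Eq:Star}. The starting point is the observation that $\Delta_b$, as rewritten in \eqref{Delta_b}, is a \emph{symmetric} function of $N,N^q,N^{q^2}$: writing $S:=N+N^q+N^{q^2}$, $R:=N^{q+1}+N^{q^2+q}+N^{q^2+1}$ and $P:=N^{q^2+q+1}$ one reads off $\Delta_b=(S-P)^2+8P-4R$. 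Hence $\Delta_b$ depends on $N$ only through the triple $(S,R,P)$, equivalently only through the minimal polynomial $T^3-ST^2+RT-P$ of $N$ over $\F_q$, which by \eqref{Star0} is irreducible precisely because $N\notin\F_q$; and the requirement that $\Delta_b$ be a nonzero square is exactly condition \eqref{Eq:Star}.

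First I would define the map $\Phi\colon\mathcal N\to\mathcal F$ assigning to $N$ its minimal polynomial over $\F_q$, and check that it is well defined and surjective. Well-definedness: for $N\in\F_{q^3}\setminus\F_q$ the minimal polynomial is a monic irreducible cubic, and by the previous paragraph it lies in $\mathcal F$ if and only if $N\in\mathcal N$. Surjectivity: any $F\in\mathcal F$, being irreducible over $\F_q$, is separable with three distinct roots, all lying in $\F_{q^3}$ and none in $\F_q$; these roots form a Frobenius orbit $\{N,N^q,N^{q^2}\}$, each member has $F$ as its minimal polynomial and, by symmetry, the same value of $\Delta_b$, so all three lie in $\mathcal N$ and map to $F$ under $\Phi$.

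Then I would compute the fibres of $\Phi$: the preimage of $F\in\mathcal F$ is exactly its root set, which has size $3$ since $F$ is separable of degree $3$, and these root sets are pairwise disjoint as $F$ runs over $\mathcal F$. This yields $|\mathcal N|=3|\mathcal F|$, which is the assertion. I would also note briefly that there is no obstruction coming from $N$ having to be an actual norm $\N_{q^6/q^3}(b)$: the norm map $\F_{q^6}^*\to\F_{q^3}^*$ is surjective, so every $N\in\F_{q^3}^*$, in particular every $N\in\F_{q^3}\setminus\F_q$, occurs.

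I do not expect a genuine obstacle here; the argument is essentially bookkeeping once the symmetric-function identity $\Delta_b=(S-P)^2+8P-4R$ is in hand, and that identity is precisely the already-displayed reformulation \eqref{Delta_b}. The one point deserving care is to make sure the passage between ``$N$'' and ``the minimal polynomial of $N$'' does not secretly lose or double-count anything, which is exactly what the explicit fibre computation above guarantees.
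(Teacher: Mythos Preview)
Your proposal is correct and follows exactly the same approach as the paper: the paper's proof is simply ``It follows from \eqref{Delta_b} and \eqref{Star0}'', which is precisely your observation that $\Delta_b=(S-P)^2+8P-4R$ depends only on the minimal polynomial of $N$, together with the 3-to-1 correspondence between elements of $\F_{q^3}\setminus\F_q$ and monic irreducible cubics over $\F_q$. Your write-up merely unpacks these two references in detail (and adds the harmless remark on surjectivity of the norm map).
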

\begin{proof}
	It follows from \eqref{Delta_b} and \eqref{Star0}.
\end{proof}

\begin{proposition}
	\label{Finally}
	The number of polynomials $F(T)=T^3-ST^2+RT-P \in \F_q[T]$ satisfying \eqref{Eq:Star} is 
	$(q^3-q^2)/2$.
\end{proposition}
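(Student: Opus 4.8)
\textbf{Proof plan for Proposition \ref{Finally}.}

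The plan is to count directly over the $q^3$ monic cubics of $\F_q[T]$, which are parametrised bijectively by the triple $(S,R,P)\in\F_q^3$; thus "number of polynomials $F$" means literally "number of triples $(S,R,P)$". Condition \eqref{Eq:Star} asks that
\[D:=(S-P)^2+8P-4R\]
be a nonzero square in $\F_q$. The key observation is that $D$ depends on $R$ in an affine way: for every fixed pair $(S,P)\in\F_q^2$, the map $\F_q\to\F_q$ given by $R\mapsto (S-P)^2+8P-4R$ has leading coefficient $-4$, which is nonzero since $q$ is odd. Hence this map is a bijection of $\F_q$, so as $R$ ranges over $\F_q$ the quantity $D$ ranges over all of $\F_q$, attaining each value exactly once.

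Consequently, for each of the $q^2$ choices of $(S,P)$, the number of admissible values of $R$ (those for which $D$ is a nonzero square) equals the number of nonzero squares of $\F_q$, namely $(q-1)/2$. Summing over all pairs $(S,P)$ yields
\[q^2\cdot\frac{q-1}{2}=\frac{q^3-q^2}{2},\]
which is the claimed count.

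There is essentially no obstacle in this argument; the single point that has to be watched is that $-4\neq 0$ in $\F_q$, which is exactly where the standing assumption that $q$ is odd enters (this includes the case that $q$ is a power of $3$, where $-4\equiv-1\neq 0$). I would also state explicitly, before the count, the bijection between monic cubics over $\F_q$ and triples $(S,R,P)\in\F_q^3$, so that the final tally of triples is unambiguously the number of polynomials $F$ satisfying \eqref{Eq:Star}.
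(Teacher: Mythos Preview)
Your proof is correct and takes essentially the same approach as the paper: both fix $(S,P)$ and observe that the number of admissible $R$ is $(q-1)/2$, then sum over the $q^2$ pairs. The only cosmetic difference is that the paper counts the $q-1$ pairs $(R,U)$ with $U\neq 0$ and then halves (since $U$ and $-U$ determine the same $R$), whereas you note directly that $R\mapsto(S-P)^2+8P-4R$ is a bijection of $\F_q$ and hence hits the $(q-1)/2$ nonzero squares exactly once each.
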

\begin{proof}
For each choice of $S,P\in \mathbb{F}_q$ there are $(q-1)$ pairs $(R,U)$ such that $U\neq 0$ and $(S-P)^2+8P-4R=U^2$. This yields $q^2(q-1)/2$ distinct triples $(S,R,P)\in \mathbb{F}_q^3$ such that $(S-P)^2+8P-4R$ is a square in $\mathbb{F}_q^*$.
\end{proof}

\begin{proposition}
	\label{Prop:Quadruples2}
	The number of pairs $(t,F(T))$, where $F(T)$ satisfies \eqref{Eq:Star} and $t\in \F_q$ is a root of $F(T)$	is exactly $(q^2-q)(q+1)/2$.
\end{proposition}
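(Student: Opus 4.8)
The plan is to convert the count into a count of triples in $\F_q^3$ and then, after completing a square, to evaluate it by fixing the root $t$.

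First I would observe that specifying a pair $(t,F(T))$ with $F(T)=T^3-ST^2+RT-P$ and $F(t)=0$ is equivalent to specifying a triple $(t,a,b)\in\F_q^3$ via the factorisation $F(T)=(T-t)(T^2-aT+b)$: once the root $t$ is prescribed, the quotient $F(T)/(T-t)$ is a uniquely determined monic quadratic, and conversely every triple produces such an $F$ having $t$ as a root. Comparing coefficients gives $S=a+t$, $R=b+at$, $P=bt$, whence $(S-P)^2+8P-4R=\Phi_t(a,b)$ with
\[
\Phi_t(a,b):=(a+t-bt)^2+8bt-4b-4at .
\]
Under this correspondence $F$ satisfies \eqref{Eq:Star} exactly when $\Phi_t(a,b)\in(\F_q^*)^2$, so the quantity to be computed equals $\#\{(t,a,b)\in\F_q^3:\Phi_t(a,b)\in(\F_q^*)^2\}$.

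Next I would complete the square in $\Phi_t$. Setting $c:=a+t-bt$ --- a change of variable which for each fixed $t$ is a bijection $(a,b)\mapsto(c,b)$ of $\F_q^2$ --- a short computation gives
\[
\Phi_t(a,b)=(c-2t)^2-4(t-1)^2\,b .
\]
Fixing $t\in\F_q$ and putting $u:=c-2t$ (so that $(a,b)\mapsto(u,b)$ is still a bijection of $\F_q^2$), the number of admissible $(a,b)$ for this $t$ is the number of pairs $(u,b)\in\F_q^2$ with $u^2-4(t-1)^2b\in(\F_q^*)^2$. Since $q$ is odd, $(\F_q^*)^2$ has $(q-1)/2$ elements. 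If $t\neq1$ then $4(t-1)^2\neq0$, so for each of the $q$ values of $u$ the map $b\mapsto u^2-4(t-1)^2b$ is a bijection of $\F_q$ and meets $(\F_q^*)^2$ in exactly $(q-1)/2$ points, giving $q(q-1)/2$ pairs. If $t=1$ the expression reduces to $u^2$, which lies in $(\F_q^*)^2$ precisely when $u\neq0$, giving $q(q-1)$ pairs.

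Summing over the $q-1$ values $t\neq1$ together with the single value $t=1$ yields
\[
(q-1)\cdot\frac{q(q-1)}{2}+q(q-1)=q(q-1)\!\left(\frac{q-1}{2}+1\right)=\frac{q(q-1)(q+1)}{2}=\frac{(q^2-q)(q+1)}{2},
\]
which is the asserted value. All steps are elementary, so I do not anticipate a genuine obstacle: the point requiring care is verifying the clean identity $\Phi_t(a,b)=(c-2t)^2-4(t-1)^2b$ and then checking that both successive substitutions are honest bijections of $\F_q^2$ for each fixed $t$. The single value $t=1$, for which the right-hand side becomes the perfect square $(c-2)^2$, is the only case behaving differently, and it causes no difficulty.
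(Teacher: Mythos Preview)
Your proof is correct. The overall strategy matches the paper's: parametrise the pairs by $(t,\text{two free coefficients})$, fix $t$, and count the remaining pairs satisfying the square condition, with $t=1$ requiring separate treatment. Both arguments arrive at $q(q-1)/2$ admissible pairs for each $t\neq1$ and $q(q-1)$ for $t=1$.

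The execution differs, however, and yours is cleaner. The paper keeps the original coordinates $(S,R)$ with $P$ eliminated, interprets the condition $(S-P)^2+8P-4R=U^2$ as an affine quadric in $(S,R,U)$, and counts its $\F_q$-points by identifying it as (the affine part of) a hyperbolic quadric and subtracting the ideal points and the $U=0$ locus. Your change of variables $(a,b)\mapsto(u,b)$ brings the condition to the transparent form $u^2-4(t-1)^2b\in(\F_q^*)^2$, after which the count is immediate by linearity in $b$. This avoids any appeal to the classification of quadrics or conics over $\F_q$ and makes the special role of $t=1$ visible at a glance (the coefficient of $b$ vanishes). The identity $\Phi_t(a,b)=(c-2t)^2-4(t-1)^2b$ checks out, and both substitutions are indeed bijections of $\F_q^2$ for each fixed $t$, so there is no gap.
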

\begin{proof}
	For each such $t$ we have that $t^3-St^2+Rt-P=0$ and hence $P=t^3-St^2+Rt$.
	From  $(S-P)^2+8P-4R=U^2$ one gets 
	\begin{equation}\label{Eq_t2}
	(S-t^3+St^2-Rt)^2+8(t^3-St^2+Rt)-4R=U^2.
	\end{equation}
\begin{itemize}
	\item If $t\neq 1$, then the above equation defines (in terms of $S$, $R$ and $U$) a hyperbolic quadric which has exactly $2q+1$ ideal points, namely those satisfying 
	\[\left(-R t+S t^2+S-U\right) \left(-R t+S t^2+S+U\right)=0.\] 
	Therefore there exist exactly $(q+1)^2-2q-1=q^2$ triples $(S,R,U)$ satisfying \eqref{Eq_t2}. Among them those with $U=0$ satisfy 
	\[(S-t^3+St^2-Rt)^2+8(t^3-St^2+Rt)-4R=0\]
	and they are $q$, that is the number of the affine $\mathbb{F}_q$-rational points of an irreducible conic with a unique point at infinity obtained when $S=t$ and $R=1+t^2$. This means that the number of distinct pairs $(S,R)\in\F_q^2$ satisfying \eqref{Eq_t2} and such that $(S-t^2+St^2-Rt)^2+8(t^2-St^2+Rt)-4R\neq 0$ is exactly $(q^2-q)/2$ since  $(S,R,-U)$ and $(S,R,U)$ come in pairs. 
	\item Assume $t=1$. In this case $P=1-S+R$ and  Equation \eqref{Eq_t2} reads
	\[(R-2 S-U+3) (R-2 S+U+3)=0.\]
	So, any pair $(S,R)\in \mathbb{F}_q^2$ with $R-2S+3\neq 0$ gives rise to one pair $(1,F(T))$ satisfying our conditions. In this case we obtain $q^2-q$ such pairs.
\end{itemize}
In total, the number of suitable pairs is 
\[(q-1) \frac{q^2-q}{2}+(q^2-q)=\frac{(q^2-q)(q+1)}{2}.\]
\end{proof}

\begin{proposition}
	\label{Prop:Quadruples3}
	The number of pairs $(B,F(T))$, where $F(T)$ satisfies \eqref{Eq:Star} and $F(T)=(T-A)(T-B)(T-B^q)$ for some $A\in \F_q$, $B\in \F_{q^2}$ is exactly 
	\[\frac{1}{2} \left(q^3-2 q^2+2 q+3\right).\]
\end{proposition}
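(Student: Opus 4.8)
The first move is a reparametrisation. Given $A\in\F_q$ and $B\in\F_{q^2}$, set $F(T)=(T-A)(T-B)(T-B^q)=T^3-ST^2+RT-P$; conversely, from a pair $(B,F(T))$ as in the statement the element $A$ is recovered uniquely as the remaining root of $F$ after deleting the factors $(T-B)$ and $(T-B^q)$, and it lies in $\F_q$. Hence $(A,B)\mapsto(B,F(T))$ is a bijection from $\F_q\times\F_{q^2}$ onto the set of pairs we must count (with no restriction on whether $B$ lies in $\F_q$). So it is enough to count the pairs $(A,B)\in\F_q\times\F_{q^2}$ for which the associated $F(T)$ satisfies \eqref{Eq:Star}.

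Now I would rewrite $S,R,P$ using $s:=B+B^q=\Tr_{q^2/q}(B)\in\F_q$ and $n:=B^{q+1}=\N_{q^2/q}(B)\in\F_q$, so that $S=A+s$, $R=As+n$, $P=An$. A direct computation then gives
$$E:=(S-P)^2+8P-4R=(1-n)^2A^2-\bigl(2s(1+n)-8n\bigr)A+(s^2-4n),$$
a polynomial in $A$ of degree at most $2$ whose discriminant with respect to $A$ equals $16\,n\,(n+1-s)^2$. Condition \eqref{Eq:Star} says that $E$ must be a nonzero square of $\F_q$. The key point of the proof is that, since $(n+1-s)^2$ is already a square, this discriminant is a square in $\F_q$ exactly when $n=\N_{q^2/q}(B)$ is; this dichotomy drives the count.

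Next, for each fixed $B$ I would count the number of $A\in\F_q$ with $E$ a nonzero square, by cases on $n$. If $n=0$ (i.e. $B=0$) then $E=A^2$, giving $q-1$ good $A$. If $n=1$ then $E$ is affine in $A$ with slope $8-4s$, and $s=2$ happens only for $B=1$; so each of the $q$ values of $B$ with $\N_{q^2/q}(B)=1$, $B\neq1$, makes $A\mapsto E$ a bijection of $\F_q$ and yields $(q-1)/2$ good $A$, while $B=1$ gives $E\equiv0$ and none. If $n\notin\{0,1\}$ then $(1-n)^2$ is a nonzero square and $E=(1-n)^2(A-A_+)(A-A_-)$ with $A_+\neq A_-$ — equality would force $s=n+1$, hence $\{B,B^q\}=\{1,n\}$, which is impossible when $\N_{q^2/q}(B)=n\notin\{0,1\}$. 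If $n$ is a square in $\F_q$ then $A_\pm\in\F_q$ and, using $\sum_{A}\chi\bigl((A-A_+)(A-A_-)\bigr)=-1$ for the quadratic character $\chi$, exactly $(q-3)/2$ values of $A$ make $E$ a nonzero square; if $n$ is a non-square then $A_-=A_+^q\notin\F_q$, so $(A-A_+)(A-A_-)$ is an irreducible quadratic in $A$ never vanishing on $\F_q$, and the same character-sum identity gives $(q-1)/2$ good $A$.

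Finally I would sum over $B$, grouping fibres of the norm map: $\N_{q^2/q}\colon\F_{q^2}^*\to\F_q^*$ is $(q+1)$-to-one, there are $(q-3)/2$ nonzero squares different from $1$ and $(q-1)/2$ non-squares in $\F_q^*$, plus the single value $B=0$ and the single value $B=1$ (the only one with $\N_{q^2/q}(B)=1$ that contributes $0$). This yields
$$(q-1)+q\cdot\frac{q-1}{2}+(q+1)\Bigl(\frac{q-3}{2}\Bigr)^{2}+(q+1)\Bigl(\frac{q-1}{2}\Bigr)^{2}=\frac12\bigl(q^{3}-2q^{2}+2q+3\bigr),$$
which is the claimed value. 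The only genuine work is the discriminant identity $16\,n\,(n+1-s)^2$ (elementary but somewhat lengthy symbolic manipulation) and careful bookkeeping of the degenerate sub-cases $n\in\{0,1\}$ and $A_+=A_-$; I expect the latter to be the place where a miscount is easiest to make, so those exclusions must be argued explicitly.
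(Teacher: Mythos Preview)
Your proof is correct and follows essentially the same route as the paper: both compute the discriminant of $E$ with respect to $A$, obtain the factorisation $16\,n(n+1-s)^2$ (the paper writes this as $16(B-1)^{2(q+1)}B^{q+1}$, which is the same thing since $(B-1)^{q+1}=n+1-s$), and then split into the cases $B=0$, $B=1$, $B^{q+1}=1$ with $B\neq1$, $B^{q+1}$ a nonzero square ${}\neq1$, and $B^{q+1}$ a non-square. The only stylistic difference is that the paper counts affine points on the conic $E=U^2$ in the $(A,U)$-plane and halves, whereas you use the quadratic-character sum $\sum_A\chi\bigl((A-A_+)(A-A_-)\bigr)=-1$ directly; these are equivalent counting devices and yield the same per-case numbers $(q-1,\;0,\;(q-1)/2,\;(q-3)/2,\;(q-1)/2)$.
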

\begin{proof}
	In $(S-P)^2+8P-4R=U^2$ we have $S=A+B+B^q$, $R=AB+AB^q+B^{q+1}$ and $P=AB^{q+1}$.
	Thus one gets 
	\begin{equation}
	\label{Eq_t3}
(B-B^q)^2+A(8B^{q+1}-2(1+B^{q+1})(B+B^q))+A^2(B^{q+1}-1)^2=U^2.
	\end{equation}
When $B^{q+1}\neq 1$,  the discriminant of the left hand side, as a polynomial in $A$, is 
\begin{equation}
\label{disc}
16 (B-1)^{2(q+1)} B^{q+1}.
\end{equation}
	\begin{itemize}
		\item If $B=0$, then \eqref{Eq_t3} reads $A^2=U^2$. In this case $A$ can have $q-1$ values, so there are $q-1$ pairs $(0,F(T))$ with the given conditions. 
		\item If $B=1$, then by \eqref{Eq_t3} $U=0$ and hence $B$ cannot have this value.
		\item If $B\neq 1$ and $B^{q+1}=1$, then the conic in the variables $A$ and $U$ defined by \eqref{Eq_t3} has $q$ affine points and the line $U=0$ contains exactly one of them. 
		Thus the number of solutions for each such $B$ is $(q-1)/2$. There are $q$ possible values for $B$ and thus in total $q(q-1)/2$ pairs $(B,F(T))$ in this case. 
		\item In $\F_{q^2}$, there are $(q^2-1)/2$ values of $B$ such that $B^{q+1}$ is a non-square in $\F_q$. In these cases \eqref{Eq_t3} has no solutions with $U=0$, cf. \eqref{disc}. Since the affine part of the conic in the variables $A$ and $U$, defined by \eqref{Eq_t3}, has $q-1$ affine points, for each such $B$ we get $(q-1)/2$ solutions for $A$ and hence in total $(q^2-1)(q-1)/4$ pairs $(B,F(T))$ with the given conditions.
		\item In $\F_{q^2}$, there are $(q^2-1)/2-(q+1)$ values of $B$ such that $B^{q+1}$ is a square in $\F_q\setminus \{0,1\}$. In these cases \eqref{Eq_t3} has two solutions with $U=0$, and since the affine part of the conic in the variables $A$ and $U$, defined by \eqref{Eq_t3}, has $q-1$ affine points, for each such $B$ we get $(q-3)/2$ solutions for $A$ and hence in total $(q^2-2q-3)(q-3)/4$ pairs $(B,F(T))$ with the given conditions.
	\end{itemize}
	So in total the number of suitable pairs is 
	\[\frac{1}{4} (q-1) \left(q^2-1\right)+\frac{1}{4} (q-3)
	\left(q^2-2 q-3\right)+\frac{1}{2} (q-1) q+q-1=\frac{1}{2} \left(q^3-2 q^2+2 q+3\right).\]
\end{proof}

We are now in the position to prove Theorem \ref{mainthm} when $q$ is odd.
\begin{theorem}\label{Th:Conto}
If $q$ is odd then $|\Gamma|=\frac{1}{2} \left(q^3-q^2-q-3\right)$. 
\end{theorem}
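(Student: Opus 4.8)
The plan is to compute $|\Gamma|$ directly as the number of $N\in\F_{q^3}^*$ for which $\Delta_b$ is a nonzero square in $\F_q$. This is legitimate: $\Delta_b$ depends only on $N=\N_{q^6/q^3}(b)$ by its closed form in \eqref{Delta_b}, the norm maps $\F_{q^6}^*$ onto $\F_{q^3}^*$, and by Theorem \ref{q_odd} the subspace $U_b$ is maximum scattered exactly when $\Delta_b$ is a nonzero square. The key mechanism is the symmetric-function identity behind \eqref{Delta_b} and \eqref{Star0}: writing $F_N(T):=(T-N)(T-N^q)(T-N^{q^2})=T^3-ST^2+RT-P\in\F_q[T]$ for the product over the Frobenius orbit of $N$, one has $\Delta_b=(S-P)^2+8P-4R$, so ``$\Delta_b$ is a nonzero square in $\F_q$'' is precisely condition \eqref{Eq:Star} for $F_N$. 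I will split $\F_{q^3}^*=\F_q^*\sqcup(\F_{q^3}\setminus\F_q)$ and translate both parts through this correspondence. Throughout, let $\mathcal F$ denote the set of monic cubics $T^3-ST^2+RT-P\in\F_q[T]$ satisfying \eqref{Eq:Star}.

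For $N\in\F_{q^3}\setminus\F_q$ the assignment $N\mapsto F_N$ is a $3$-to-$1$ surjection onto the set of irreducible cubics of $\F_q[T]$, so by Proposition \ref{basic} the number of such $N$ with $\Delta_b$ a nonzero square is $3a_0$, where $a_0$ is the number of irreducible members of $\mathcal F$. For $N\in\F_q^*$ all Frobenius conjugates coincide, so $F_N(T)=(T-N)^3$; hence here $\Delta_b$ is a nonzero square exactly when $(T-N)^3\in\mathcal F$ (equivalently, using the proof of Proposition \ref{Prop:delta}, when $N^2(N-1)^3(N+3)$ is a nonzero square). Since $T^3\notin\mathcal F$, the number of such $N$ equals $c$, the number of cubes $(T-t)^3$ with $t\in\F_q$ lying in $\mathcal F$. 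Therefore $|\Gamma|=3a_0+c$, and it remains to evaluate $3a_0+c$ using the three counting propositions already proved.

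Classify the members of $\mathcal F$ by their number of distinct roots in $\F_q$: let $a_k$ be the number of $F\in\mathcal F$ with exactly $k$ such roots, $k=0,1,2,3$, so $a_0$ is the count of irreducible cubics. Proposition \ref{Finally} gives $a_0+a_1+a_2+a_3=(q^3-q^2)/2$. A member of $\mathcal F$ with $k$ distinct $\F_q$-roots is counted $k$ times in Proposition \ref{Prop:Quadruples2}, so $a_1+2a_2+3a_3=(q^2-q)(q+1)/2=(q^3-q)/2$. For Proposition \ref{Prop:Quadruples3} one checks, by running through the possible factorization types, that an irreducible cubic and a cubic with three distinct $\F_q$-roots admit no representation $(T-A)(T-B)(T-B^q)$ with $A\in\F_q$, $B\in\F_{q^2}$; a squarefree cubic with a single $\F_q$-root admits exactly two (the choices $B$ and $B^q$ among its conjugate pair of roots outside $\F_q$); and a cubic $(T-t)^2(T-s)$ with $s\neq t$, as well as a cube $(T-t)^3$, admits exactly one. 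Since the squarefree single-root members of $\mathcal F$ number $a_1-c$, Proposition \ref{Prop:Quadruples3} reads $2(a_1-c)+a_2+c=2a_1+a_2-c=(q^3-2q^2+2q+3)/2$. Adding $3a_0=\tfrac{3}{2}(q^3-q^2)-3a_1-3a_2-3a_3$ and $c=2a_1+a_2-\tfrac{1}{2}(q^3-2q^2+2q+3)$ yields $|\Gamma|=\tfrac{1}{2}(2q^3-q^2-2q-3)-(a_1+2a_2+3a_3)=\tfrac{1}{2}(2q^3-q^2-2q-3)-\tfrac{1}{2}(q^3-q)=\tfrac{1}{2}(q^3-q^2-q-3)$, as desired.

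The one place demanding care is the bookkeeping just described: one must pin down precisely, for each factorization type of a cubic over $\F_q$, the multiplicity with which it contributes to the counts of Propositions \ref{Prop:Quadruples2} and \ref{Prop:Quadruples3} --- especially the facts that a cubic with three distinct roots in $\F_q$ has no representation $(T-A)(T-B)(T-B^q)$, while a triple root or a ``double-plus-simple'' root pattern has exactly one. Once this is settled, the result is a purely formal combination of the four preceding propositions together with the identity $\Delta_b=(S-P)^2+8P-4R$; no new estimates are required, and the case distinctions on whether $-3$ is a square in $\F_q$ (which do affect the individual values of $a_0$ and of $c$) cancel in the combination $3a_0+c$.
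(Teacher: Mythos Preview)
Your proof is correct and follows essentially the same approach as the paper: you identify $|\Gamma|=3a_0+c$ (the paper writes $3\gamma_0+\delta_q$) and extract this combination from the three counting Propositions \ref{Finally}, \ref{Prop:Quadruples2}, \ref{Prop:Quadruples3} via the identical linear-algebra manipulation. The only cosmetic difference is that you spell out the multiplicity bookkeeping for Proposition \ref{Prop:Quadruples3} by factorization type, whereas the paper packages the same information into the auxiliary quantities $A_1,A_2$ with $\gamma_1=A_1+\delta_q$, $\gamma_2=A_2-\delta_q$.
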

\begin{proof} We will count the exact number of norms $N$ such that $\Delta_b$ is a square of $\F_{q}^*$. 
For $i\in \{0,1,2,3\}$ denote by $\gamma_i$ the number of  polynomials $F(T)=T^3-ST^2+RT-P \in \F_{q}[T]$ with exactly $i$ distinct roots in $\F_q$ and satisfying \eqref{Eq:Star}. 
Consider the pairs $(t,F(T))$, where $F(T)$ is a polynomial satisfying \eqref{Eq:Star} and $t\in \F_q$ is a root of $F(T)$. By Proposition \ref{Prop:Quadruples2}, the number of such pairs is $(q^2-q)(q+1)/2$ and hence also
\begin{equation}
\label{dc0}
\gamma_1+2\gamma_2+3\gamma_3=\frac{(q^2-q)(q+1)}{2}.
\end{equation}
By Proposition \ref{Finally}, the number of polynomials $F(T)$ satisfying \eqref{Eq:Star} is $q^2(q-1)/2$, thus 
\begin{equation}
\label{dc}
\gamma_0=\frac{q^2(q-1)}{2}-\gamma_1-\gamma_2-\gamma_3.
\end{equation}
Denote by $\delta_q$ the number of $N\in \F_q$ such that $\Delta_b$ is a square in $\F_q^*$. 
Note that this number depends on $q$, however, we won't need its exact value. It is easy to see that $\delta_q$ is the same as the number of polynomials $F(T)$ satisfying \eqref{Eq:Star} and with a unique $3$-fold root in $\F_q$. Denote by $A_2$ the number of totally reducible polynomials $F(T)$ over $\F_q$ (polynomials which can be factorized into linear factors over $\F_q$) satisfying \eqref{Eq:Star} and with at most $2$ distinct roots in $\F_q$. Also, denote by $A_1$ the number of reducible polynomials $F(T)$ over $\F_q$ satisfying \eqref{Eq:Star} and with a unique, one-fold root in $\F_q$. Then
\[\gamma_1=A_1+\delta_q,\]
\[\gamma_2=A_2-\delta_q.\]
By Proposition \ref{basic} the number of norms $N$ such that $\Delta_b$ is a square of $\F_q^*$ is, using also \eqref{dc0} and \eqref{dc},
\begin{eqnarray*}
3\gamma_0+\delta_q&=&3\frac{q^2(q-1)}{2}-3\gamma_1-3\gamma_2-3\gamma_3+\delta_q\\
&=&3\frac{q^2(q-1)}{2}-\frac{(q^2-q)(q+1)}{2}-2\gamma_1-\gamma_2+\delta_q\\
&=&3\frac{q^2(q-1)}{2}-\frac{(q^2-q)(q+1)}{2}-2A_1-A_2.
\end{eqnarray*}
In Proposition \ref{Prop:Quadruples3}, $B\in \F_{q^2}\setminus \F_q$ and $B^q$ define the same polynomial $F(T)$ and hence
\[2A_1+A_2=\frac{1}{2} \left(q^3-2 q^2+2 q+3\right),\]
 thus 
\[3\gamma_0+\delta_q=3\frac{q^2(q-1)}{2}-\frac{(q^2-q)(q+1)}{2}-\frac{1}{2} \left(q^3-2 q^2+2 q+3\right)=\frac{1}{2} \left(q^3-q^2-q-3\right).\]
\end{proof}

\medskip

\subsection{The $q$ even case}
Similarly to the $q$ odd case, we will need some preparation. 
Define $N$ and $\phi_b(T)$ in the exact same way as in \eqref{N} and \eqref{Pol_T}, respectively. Since $q$ is even, we have now
%
\[\phi_b(T)=AT^2+BT+C\in \F_{q^6}[T],\]
where
\begin{eqnarray*}
A&=&b^{q+1},\\
B&=&N^{q^2+q+1}+N+N^q+N^{q^2},\\
C&=&A^{q^3}(N+1)^{q^2+q+1}.
\end{eqnarray*}

As before we will assume $b\neq 0$ and hence $A\neq 0$. 

\begin{proposition}\label{Prop_qeven}
There is a root of $\phi_b(T)$ which is a $(q^2+q+1)$-th power in $\mathbb{F}_{q^6}$ if and only if one of the following conditions holds
\begin{itemize}
\item $B=0$,
\item $B\neq 0$, and $N \in \mathbb{F}_q\setminus\{0,1\}$,
\item $B\neq 0$, $N\in  \mathbb{F}_{q^3}\setminus \mathbb{F}_q$,  and $\Tr_{q^3/2}(AC/B^2)=1$.
\end{itemize}
In all cases, each of the roots of $\phi_b(T)$ are $(q^2+q+1)$-th powers.
\end{proposition}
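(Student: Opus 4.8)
The plan is to follow the route of Proposition~\ref{Prop:delta}, with the square‑root extraction of the odd case replaced by an Artin--Schreier extraction. Throughout I will use that a nonzero $t\in\F_{q^6}$ is a $(q^2+q+1)$-th power if and only if $t^{(q-1)(q^3+1)}=1$, equivalently $t^{q^4+q}=t^{q^3+1}$, while $t=0$ is trivially such a power. Before splitting into cases I would record the identities
\[B=\N_{q^3/q}(N)+\Tr_{q^3/q}(N)\in\F_q,\qquad A^{q^3+1}=N^{q+1},\qquad A^{q^4}A^{q}=\bigl(A^{q^3+1}\bigr)^{q}=N^{q^2+q},\]
and, when $B\neq 0$ and $\theta:=AC/B^2$,
\[\theta=N^{q+1}\cdot\frac{\N_{q^3/q}(N+1)}{B^2}\in\F_{q^3},\qquad\text{so}\qquad \theta^{q-1}=\bigl(N^{q+1}\bigr)^{q-1}=N^{q^2-1},\]
the last equality since $\N_{q^3/q}(N+1)/B^2\in\F_q$. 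I would also note that $N=1$ forces $B=\N_{q^3/q}(1)+\Tr_{q^3/q}(1)=1+1=0$; hence in the case $B\neq 0$ one automatically has $N\notin\{0,1\}$, $\theta\neq 0$ and $\mu:=\N_{q^3/q}(N+1)/B^2\in\F_q^*$.

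\emph{The case $B=0$.} Here $\phi_b(T)=AT^2+C$ has a unique (double) root $t$ with $t^2=C/A=A^{q^3-1}(N+1)^{q^2+q+1}=\bigl(b^{q^2-1}(N+1)\bigr)^{q^2+q+1}$, using $(q+1)(q^3-1)=(q^2-1)(q^2+q+1)$. So $t^2$ is a $(q^2+q+1)$-th power, and since $q^2+q+1$ is odd the subgroup of $(q^2+q+1)$-th powers of $\F_{q^6}^*$ is closed under square roots; hence $t$ itself is one (with $t=0$ when $N=1$). This settles the first bullet.

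\emph{The case $B\neq 0$.} The substitution $T=(B/A)Y$ turns $\phi_b(T)$ into $(B^2/A)(Y^2+Y+\theta)$; as $\theta\in\F_{q^3}$ we get $\Tr_{q^6/2}(\theta)=\Tr_{q^3/2}(\theta+\theta^{q^3})=0$, so $Y^2+Y=\theta$ has distinct roots $\varepsilon,\varepsilon+1\in\F_{q^6}$ and the roots of $\phi_b$ are $(B/A)\varepsilon$ and $(B/A)(\varepsilon+1)$. Substituting $t=(B/A)\varepsilon$ into $t^{q^4+q}=t^{q^3+1}$ and using $B\in\F_q$ together with $A^{q^3+1}=N^{q+1}$ and $A^{q^4}A^q=N^{q^2+q}$, every power of $B$ cancels and the condition collapses to $N\,\varepsilon^{q^4+q}=N^{q^2}\,\varepsilon^{q^3+1}$ (the same reduction holding for the other root). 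Since $\theta\in\F_{q^3}$, $\varepsilon^{q^3}$ is again a root of $Y^2+Y=\theta$, so $\varepsilon^{q^3}\in\{\varepsilon,\varepsilon+1\}$, with $\varepsilon\in\F_{q^3}$ exactly when $\Tr_{q^3/2}(\theta)=0$. If $\Tr_{q^3/2}(\theta)=1$, then $\varepsilon^{q^3}=\varepsilon+1$, so $\varepsilon^{q^3+1}=\varepsilon(\varepsilon+1)=\theta$ and $\varepsilon^{q^4+q}=(\varepsilon^q+1)\varepsilon^q=\theta^q$, and the reduced condition becomes $\theta^{q-1}=N^{q^2-1}$, which always holds; so both roots are $(q^2+q+1)$-th powers. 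If $\Tr_{q^3/2}(\theta)=0$, then $\varepsilon\in\F_{q^3}$, $\varepsilon^{q^4+q}=\varepsilon^{2q}$ and $\varepsilon^{q^3+1}=\varepsilon^2$, and the reduced condition reads $(\varepsilon^2)^{q-1}=N^{q^2-1}=(N^{q+1})^{q-1}$, i.e.\ $\varepsilon^2=\rho N^{q+1}$ for some $\rho\in\F_q^*$; combined with $\varepsilon^2+\varepsilon=\theta=\mu N^{q+1}$ this gives $\varepsilon=(\rho+\mu)N^{q+1}$, and squaring and comparing then forces $N^{q+1}\in\F_q$, hence $N\in\F_{q^2}\cap\F_{q^3}=\F_q$. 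Conversely, if $N\in\F_q\setminus\{0,1\}$ one computes $\theta=1/(N+1)\in\F_q^*$, so $\varepsilon\in\F_{q^2}$, $\varepsilon^{q^4+q}=\varepsilon^{q+1}=\varepsilon^{q^3+1}$, and the condition holds since $N^{q^2}=N$; again both roots are such powers. Collecting the two subcases: for $B\neq 0$ a root of $\phi_b$ is a $(q^2+q+1)$-th power if and only if $N\in\F_q\setminus\{0,1\}$, or $N\in\F_{q^3}\setminus\F_q$ and $\Tr_{q^3/2}(AC/B^2)=1$, and whenever this happens every root is such a power. This gives the second and third bullets and the final sentence.

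The only genuinely computational point is the reduction of $t^{q^4+q}=t^{q^3+1}$ to $N\,\varepsilon^{q^4+q}=N^{q^2}\,\varepsilon^{q^3+1}$, where the explicit forms $A=b^{q+1}$, $B=N^{q^2+q+1}+N+N^q+N^{q^2}$, $C=A^{q^3}(N+1)^{q^2+q+1}$ must be used to cancel all $b$-exponents, together with the $\F_q$-rationality of $B$ and the $\F_{q^3}$-rationality of $AC/B^2$; as in the odd case this can be verified by a short symbolic computation. Everything downstream of that reduction is the short trace case-distinction above.
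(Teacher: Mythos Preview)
Your proof is correct and follows essentially the same route as the paper: pass to the Artin--Schreier form $Y^2+Y=AC/B^2$, split on whether the roots lie in $\F_{q^3}$ (i.e.\ on $\Tr_{q^3/2}(AC/B^2)$), and in each branch test the $(q^2+q+1)$-th power condition. The only organizational difference is that the paper checks the equivalent criterion $t^{q^3+1}\in\F_q$ directly in each case, whereas you first reduce $t^{q^4+q}=t^{q^3+1}$ uniformly to $N\,\varepsilon^{q^4+q}=N^{q^2}\,\varepsilon^{q^3+1}$ via $A^{q^3+1}=N^{q+1}$, $A^{q^4+q}=N^{q^2+q}$; this is a cosmetic variation and the remaining casework coincides.
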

\begin{proof}
First note that a field element $t\in \F_{q^6}$ is a $(q^2+q+1)$-th power if and only if $t^{q^3+1} \in \mathbb{F}_q$. 

Let us deal first with the case $B=0$. In this case the root $t$ of $\phi_b(T)$ satisfies $t^2=C/A$. Therefore
\[t^{2(q^3+1)}=\frac{C^{q^3+1}}{A^{q^3+1}}=(N+1)^{2(q^2+q+1)}\in \mathbb{F}_q\]
and so $t^{q^3+1} \in \mathbb{F}_q.$ 

From now on we assume $B\neq 0$. Then $t$ is a root of $\phi_b(T)$ if and only if $t A/B$ is a root of
\[\psi_b(T):=T^2+T+AC/B^2 \in \F_{q^3}[T].\]
Note that, since $AC/B^2 \in \mathbb{F}_{q^3}$, we have
\[\Tr_{q^6/2}(AC/B^2)=\Tr_{q^6/q^3}\left(\Tr_{q^3/2}(AC/B^2)\right)=0,\]
and therefore the roots of $\psi_b(T)$ (and hence of $\phi_b(T)$) belong to $\mathbb{F}_{q^6}$; see \cite[page 8]{Hirsbook}. 
Also, $B\in \mathbb{F}_q$ and $C/A^{q^3} \in \mathbb{F}_q$.

\begin{itemize}
\item First suppose $\Tr_{q^3/2}(AC/B^2)=1$. Then the roots of $\psi_b(T)$ are some $s, s^{q^3}\in \mathbb{F}_{q^6}\setminus \mathbb{F}_{q^3}$, thus $s^{q^3+1}=AC/B^2$. For a root $t=sB/A$ of $\phi_b(T)$
\[t^{q^3+1}=\left(\frac{sB}{A}\right)^{q^3+1}=\frac{B^2}{A^{q^3+1}}s^{q^3+1}=\frac{C}{A^{q^3}}\in \mathbb{F}_q.\]
\item Suppose now $\Tr_{q^3/2}(AC/B^2)=0$. In this case the roots of $\psi_b(T)$ 
belong to $\F_{q^3}$. We distinguish two cases depending on $N \in \mathbb{F}_q$ or not. Note that $N\notin \{0,1\}$ since $B\neq 0$. 
\begin{itemize}
\item If $N\in \mathbb{F}_q\setminus\{0,1\}$ then $AC/B^2=1/(N+1)\in \mathbb{F}_q$ and hence the roots of $\psi_b(T)$ are some $s, s+1\in \mathbb{F}_q$. So, for a root $t=sB/A$ of $\phi_b(T)$
\[t^{q^3+1}=\left(\frac{sB}{A}\right)^{q^3+1}=\frac{B^2}{N^{2}}s^{2}\in \mathbb{F}_q.\]
\item If $N \in \mathbb{F}_{q^3}\setminus\mathbb{F}_q$ then first observe $AC/B^2\in \mathbb{F}_{q^3}\setminus\mathbb{F}_q$.
Indeed, $AC/B^2=N^{q+1}(N+1)^{q^2+q+1}/(N^{q^2+q+1}+N+N^q+N^{q^2})^2\in \F_q$ would yield $N^{q+1}\in \F_q$ and hence $N\in \F_{q^2} \cap \F_{q^3}=\F_q$, a contradiction. Then also the roots of $\psi_b(T)$ are some $s,s+1\in \mathbb{F}_{q^3}\setminus\mathbb{F}_q$. Suppose, contrary to our claim, that with $t=sB/A$
\begin{equation}
\label{par0}
t^{q^3+1}=\left(\frac{Bs}{A}\right)^{q^3+1}=\frac{B^2}{A^{q^3+1}}s^{2}\in \mathbb{F}_q.
\end{equation}
Since $s^2=s+AC/B^2$, it follows that
\[\frac{B^2}{A^{q^3+1}}(s+AC/B^2)=\frac{B^2}{A^{q^3+1}}s+C/A^{q^3}\in \F_q.\]
Here $C/A^{q^3}\in \F_q$ and hence $\frac{B^2}{A^{q^3+1}}s\in \F_q$. 
Then dividing by \eqref{par0} gives $s\in \F_q$, a contradiction.
\end{itemize}


\end{itemize}
\end{proof}

For a polynomial $f(X)=\sum_i a_i X^i \in \F[X]$ and $\sigma \in \mathrm{Aut}(\F)$ we will denote by  $f^\sigma(X)$ the polynomial $\sum_i a_i^\sigma X^i \in \F[X]$.

\begin{lemma}
	\label{lemmanuovo}
	If $\Tr_{q^3/2}(AC/B^2)=1$ and $t$ is a root of $\phi^{q^4}_b(T)$ 
then $t^q=b^{q^4-1}t+b^{q^2-1}(N+1)^{q+1}$.
\end{lemma}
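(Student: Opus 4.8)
The plan is to reduce the claimed $\F_{q^6}$-affine relation to a polynomial identity in $N$ alone, reusing the substitution from the proof of Proposition~\ref{Prop_qeven}.

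First I would rephrase the hypothesis. Since $t\in\F_{q^6}$, raising $\phi_b^{q^4}(t)=0$ to the power $q^2$ gives $\phi_b(t^{q^2})=0$; so $r:=t^{q^2}$ is a root of $\phi_b(T)$, and $t=r^{q^4}$, $t^q=r^{q^5}$. In the case $\Tr_{q^3/2}(AC/B^2)=1$ the proof of Proposition~\ref{Prop_qeven} shows that $r=sB/A$ for some root $s$ of $\psi_b(T)=T^2+T+AC/B^2\in\F_{q^3}[T]$ with $s\in\F_{q^6}\setminus\F_{q^3}$. Because the coefficients of $\psi_b$ lie in $\F_{q^3}$ and its two roots sum to $1$, this forces $s^{q^3}=s+1$ (characteristic $2$). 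Substituting $r=sB/A$ with $A=b^{q+1}$, $B\in\F_q$, and using $N=b^{q^3+1}$ (hence $N^q=b^{q^4+q}$) together with $N+1\in\F_{q^3}$, a short manipulation with Frobenius exponents shows that the target relation $t^q=b^{q^4-1}t+b^{q^2-1}(N+1)^{q+1}$ is equivalent to
\[
s^q+s=\frac{N^q(N+1)^{q^2+1}}{B}.
\]
(Here $B\neq0$, $N\neq0$ and $N\neq1$ — since $N=1$ would give $B=0$ — so no division is problematic.)

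To establish this identity I would compare both sides inside $\F_{q^3}$. The left-hand side $\mu:=s^q+s$ lies in $\F_{q^3}$ because $\mu^{q^3}=s^{q^4}+s^{q^3}=(s^q+1)+(s+1)=\mu$, and the right-hand side $v:=N^q(N+1)^{q^2+1}/B$ lies in $\F_{q^3}$ since $N\in\F_{q^3}$ and $B\in\F_q$. Next, both $\mu$ and $v$ are roots of the quadratic $T^2+T+\bigl(AC/B^2+(AC/B^2)^q\bigr)\in\F_{q^3}[T]$: for $\mu$ this is immediate from $s^2=s+AC/B^2$; for $v$ it reduces, after clearing denominators and using $AC=N^{q+1}(N+1)^{q^2+q+1}$, exactly to the defining identity $B=N^{q^2+q+1}+N+N^q+N^{q^2}$. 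The constant term of that quadratic has zero absolute trace over $\F_2$ (it has the form $x+x^q$ with $x\in\F_{q^3}$), so the quadratic splits over $\F_{q^3}$ into two roots differing by $1$, and these two roots are distinguished by $\Tr_{q^3/q}$. A telescoping sum gives $\Tr_{q^3/q}(\mu)=\mu+\mu^q+\mu^{q^2}=s+s^{q^3}=1$, while a direct expansion of $v+v^q+v^{q^2}$ produces a numerator that again collapses to $B$, so $\Tr_{q^3/q}(v)=1$ as well. Hence $\mu=v$.

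Finally, the two roots $s$ and $s^{q^3}=s+1$ of $\psi_b$ give the same value $s^q+s=(s+1)^q+(s+1)$, so the identity $s^q+s=v$, and therefore the asserted relation, holds for every root $r$ of $\phi_b$, i.e. for every root $t$ of $\phi_b^{q^4}$. The one genuinely non-routine step is deciding which of the two $\F_{q^3}$-roots of the auxiliary quadratic equals $s^q+s$; this is precisely where the trace computations $\Tr_{q^3/q}(\mu)=\Tr_{q^3/q}(v)=1$ do the work, everything else being bookkeeping with Frobenius exponents.
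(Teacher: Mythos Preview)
Your argument is correct and takes a genuinely different route from the paper. The paper proceeds by directly substituting $s:=b^{q^4-1}t+b^{q^2-1}(N+1)^{q+1}$ into $\phi_b^{q^5}(T)$, observing that this vanishes, and then eliminating the ``wrong'' root $t^q+B/A^{q^5}$ by an Artin--Schreier telescoping: writing $y=tA^{q^4}/B$ with $y^2+y=E:=(AC)^{q^4}/B^2$, one has $y^q+y=E+E^2+\cdots+E^{q/2}$, and comparing two expressions for $t^q$ forces $\Tr_{q^3/2}(E)=0$, contradicting the hypothesis.

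You instead pass to the normalized variable $s$ with $s^2+s=AC/B^2$, reduce the statement to the single identity $s^q+s=N^q(N+1)^{q^2+1}/B$ in $\F_{q^3}$, and settle it by showing that both sides satisfy the same quadratic $T^2+T+(AC/B^2+(AC/B^2)^q)$ and have the same value of $\Tr_{q^3/q}$. This is cleaner in two respects: the auxiliary quadratic is intrinsic (no guess for what to substitute into $\phi_b^{q^5}$), and the tie-breaking uses $\Tr_{q^3/q}$ directly rather than routing through $\Tr_{q/2}$ and the sum $E+E^2+\cdots+E^{q/2}$. The paper's approach, on the other hand, needs no separate verification that $v$ satisfies the auxiliary quadratic and makes the role of the hypothesis $\Tr_{q^3/2}(AC/B^2)=1$ appear as a literal contradiction at the end. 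Both arguments ultimately resolve the same two-fold ambiguity; yours does so with slightly less computation.
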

\begin{proof}
	Substitution of $s:=b^{q^4-1}t+b^{q^2-1}(N+1)^{q+1}$ in $\phi^{q^5}_b(T)$ gives $0$ and hence $s=t^q$ or $s=t^q+B^{q^5}/A^{q^5}=t^q+b^{-1-q^5}B$. In the latter case 
\begin{equation}
\label{tq1}	
	t^q=b^{q^4-1}t+b^{q^2-1}(N+1)^{q+1}+b^{-q^5-1}B,
\end{equation}
	we will show that this is a contradiction.
	Note that 
	\[(tA^{q^4}/B)+(tA^{q^4}/B)^2=(AC)^{q^4}/B^2.\]
Put $e=(tA^{q^4}/B)$ and $E=(AC)^{q^4}/B^2$. Then $e+e^2=E$ and hence 
 $(e+e^2)+(e+e^2)^2+\ldots+(e+e^2)^{q/2}=E+E^2+\ldots+E^{q/2}$, where the left hand side equals $e+e^q$. Since $B$ is in $\F_q$, this can be read as
\begin{equation}
\label{tq2}
	t^qA^{q^5}/B+(tA^{q^4}/B)=E+E^2+\ldots+E^{q/2}.
\end{equation}
Expressing $t^q$ from \eqref{tq2} and comparing with \eqref{tq1} yields
\[b^{q^4-1}t+b^{q^2-1}(N+1)^{q+1}+b^{-q^5-1}B=
tA^{q^4-q^5}+\frac{B}{A^{q^5}}(E+E^2+\ldots+E^{q/2}).\]
Note that $A^{q^4-q^5}=b^{q^4-1}$ and hence we can eliminate $t$ from the equation above.
Then, multiplying by $A^{q^5}/B=b^{1+q^5}/B$ gives
\[\frac{N^{q^2}(N+1)^{q+1}+B}{B}=E+E^2+\ldots+E^{q/2}.\]
The left hand side above is $((N+N^{q^2+q})+((N+N^{q^2+q}))^q)/B$. It follows that taking $\Tr_{q^3/q}$ of both sides yields
\[0=\Tr_{q^3/2}(E)=\Tr_{q^3/2}(AC/B^2)=1,\]
and this contradiction proves the claim.
\end{proof}

\begin{theorem}
\label{q_even}
$U_b$ is a maximum scattered $\mathbb{F}_q$-subspace of $\mathbb{F}_{q^6}\times \F_{q^6}$ if and only if 
\begin{eqnarray}
\label{Condition_qeven}
B \neq 0, \quad N\in \mathbb{F}_{q^3} \setminus\{0,1\},\quad \Tr_{q^3/2}(AC/B^2)=0.
\end{eqnarray}
\end{theorem}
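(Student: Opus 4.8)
The plan is to mirror the proof of Theorem~\ref{q_odd}, with Proposition~\ref{Prop_qeven} now playing the role of Proposition~\ref{Prop:delta}. Since $b\neq0$ we have $A\neq0$ and $N=b^{q^3+1}\in\F_{q^3}\setminus\{0\}$, so in \eqref{Condition_qeven} the clause $N\in\F_{q^3}\setminus\{0,1\}$ just says $N\neq1$; that is, \eqref{Condition_qeven} amounts to ``$B\neq0$, $N\neq1$ and $\Tr_{q^3/2}(AC/B^2)=0$''. If $N=1$ then $B=N(N+1)^2=0$, so $U_b$ is not scattered by Lemma~\ref{mainlemma}, while \eqref{Condition_qeven} fails; hence assume $N\neq1$ from now on. By Theorem~\ref{Th:Sottobicchiere} and the discussion preceding Lemma~\ref{mainlemma}, $U_b$ fails to be scattered exactly when $\det M_0(m,b)=\det M_1(m,b)=0$ for some $m$, and by the elimination carried out in Lemma~\ref{mainlemma} any such $m$ is nonzero with $t:=m^{q^3+q^4+q^5}$ a root of $\phi_b(T)$. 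Therefore, if $\phi_b$ has no root that is a $(q^2+q+1)$-th power of $\F_{q^6}^*$, then $U_b$ is scattered (equivalently, maximum scattered); by Proposition~\ref{Prop_qeven} this is precisely the case $B\neq0$, $N\in\F_{q^3}\setminus\F_q$, $\Tr_{q^3/2}(AC/B^2)=0$, which establishes the ``if'' direction in this range and is consistent with \eqref{Condition_qeven} because $N\notin\F_q$ forces $N\neq1$.

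It remains to treat the three situations in which, by Proposition~\ref{Prop_qeven}, every root of $\phi_b$ is a $(q^2+q+1)$-th power: \emph{(a)} $B=0$; \emph{(b)} $B\neq0$ and $N\in\F_q\setminus\{0,1\}$; \emph{(c)} $B\neq0$, $N\in\F_{q^3}\setminus\F_q$ and $\Tr_{q^3/2}(AC/B^2)=1$. In each case the root $t$ is nonzero (because $N\neq1$), so I write $t=m^{q^3+q^4+q^5}$ for a suitable $m\in\F_{q^6}^*$, and then, exactly as in the derivation of \eqref{multi1}--\eqref{multi2} (which is characteristic-free), $\det M_0(m,b)$ and $\det M_1(m,b)$ become explicit rational expressions in $t,t^q,\dots,t^{q^5}$ and $b$. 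So $U_b$ is \emph{not} scattered if and only if, for this $m$, both expressions vanish; I would evaluate them case by case.

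\begin{itemize}
\item In case~\emph{(c)} (and in the subcase of~\emph{(b)} with $\Tr_{q^3/2}(AC/B^2)=1$), apply Lemma~\ref{lemmanuovo} to the root $t^{q^4}$ of $\phi_b^{q^4}(T)$ to get $t^{q^5}=b^{q^4-1}t^{q^4}+b^{q^2-1}(N+1)^{q+1}$; since $b^{q^4}/A^{q^4}=b/A^{q^5}=b^{-q^5}$, this gives $b^{q^4}t^{q^4}+bt^{q^5}=b^{q^2}(N+1)^{q+1}$, and combined with the identity $b^{q^4+q^2+q}+b^{q^3+q^2+1}+b^{q^4+q^3+q^2+q+1}+b^{q^2}=b^{q^2}(N^q+1)(N+1)$ it yields $\det M_1(m,b)=b^{q^2}(N+1)\bigl((N^q+1)+(N+1)^q\bigr)=0$. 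Iterating the Frobenius conjugates of the relation of Lemma~\ref{lemmanuovo} expresses every $t^{q^i}$ as an $\F_{q^6}$-affine function of $t$; substituting these into \eqref{multi1} and reducing with $\phi_b(t)=0$ should then give $\det M_0(m,b)=0$ as well, so $U_b$ is not scattered.
\item In case~\emph{(b)} with $\Tr_{q^3/2}(AC/B^2)=0$, the roots of $\psi_b(T)=T^2+T+AC/B^2$ lie in $\F_q$, so $t^{q^i}=Bs/A^{q^i}$ with $s\in\F_q$ and $B\in\F_q$; hence $b^{q^4}t^{q^4}+bt^{q^5}=Bs/b^{q^5}+Bs/b^{q^5}=0$ and, by the same identity as above, $\det M_1(m,b)=b^{q^2}(N^q+1)(N+1)=b^{q^2}(N+1)^2\neq0$. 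Thus $U_b$ is scattered, as \eqref{Condition_qeven} predicts.
\item In case~\emph{(a)} one has $N\in\F_{q^3}\setminus\F_q$ (since $B=0$ and $N\neq1$) and $t^2=C/A$; substituting this relation and its Frobenius conjugates into \eqref{multi1}--\eqref{multi2} and simplifying should give $\det M_0(m,b)=\det M_1(m,b)=0$, so $U_b$ is not scattered.
\end{itemize}

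Putting the cases together with the first paragraph, $U_b$ is maximum scattered precisely when $B\neq0$, $N\neq1$ and $\Tr_{q^3/2}(AC/B^2)=0$, i.e.\ exactly under \eqref{Condition_qeven}. The main obstacle is the explicit elimination, exactly as in the odd case: verifying $\det M_0(m,b)=0$ in cases~\emph{(a)} and~\emph{(c)} after substituting the quadratic, respectively affine, relations for the $t^{q^i}$, and confirming the stated closed forms for $\det M_1(m,b)$; these are finite but lengthy $q$-polynomial computations, best carried out symbolically as was done for Theorem~\ref{q_odd}. One must also keep the bookkeeping straight — that $t\neq0$ so that passing between $t$ and $m$ is legitimate, that $t^{q^4}$ really is a root of $\phi_b^{q^4}$ before invoking Lemma~\ref{lemmanuovo}, and that the trace hypotheses place the relevant root of $\psi_b$ in the correct subfield so that the claimed simplifications of the $t^{q^i}$ are valid.
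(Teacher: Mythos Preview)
Your approach is exactly the paper's: the same four-case split governed by Proposition~\ref{Prop_qeven}, the same use of Lemma~\ref{lemmanuovo} in the $\Tr=1$ situation, and the same reduction of $\det M_1(m,b)$ via \eqref{multi2}. Your explicit evaluations of $\det M_1(m,b)$ in cases~(b) and~(c) are correct and match the paper's; in particular your identity $b^{q^4}t^{q^4}+bt^{q^5}=b^{q^2}(N+1)^{q+1}$ from Lemma~\ref{lemmanuovo} and the resulting factorisation $\det M_1(m,b)=b^{q^2}(N+1)\bigl((N^q+1)+(N+1)^q\bigr)=0$ is exactly what the paper obtains.

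The only genuine gap is the one you flag yourself: the verification that $\det M_0(m,b)=0$ in cases~(a) and~(c) (and in the $\Tr=1$ subcase of~(b)). The paper does not avoid this either; it carries out precisely the symbolic elimination you describe. In case~(c) the paper iterates Lemma~\ref{lemmanuovo} to express all $\gamma^{q^i}$ affinely in $\gamma^q$ (equivalently, all $t^{q^i}$ affinely in one conjugate), substitutes into \eqref{multi1}, and finds that the result factors with $\phi_b^{q^4}(\gamma^q)$ as a factor, hence vanishes. In case~(a) it squares \eqref{multi1}--\eqref{multi2}, substitutes $t^2=C/A$ and its conjugates, and repeatedly uses $B=0$ (i.e.\ $\Tr_{q^3/q}(N)=\N_{q^3/q}(N)$) to obtain zero. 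So your ``should give'' steps are correct in outcome, but they are not formalities: they are the technical core of the argument and in the paper are done by computer-assisted polynomial manipulation rather than by a slick identity.
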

\begin{proof}
We follow the same argument as in Theorem \ref{q_odd}. 
By Lemma \ref{mainlemma}, if $\phi_b$ has no roots which are $(1+q+q^2)$-th powers then $U_b$ is scattered. According to Proposition \ref{Prop_qeven} it is convenient to distinguish the following cases.

\begin{itemize}
\item Suppose that $B=0$.  In this case the solution of $\phi_b(T)=0$ is $t=\sqrt{C/A}\neq 0$ and by Proposition \ref{Prop_qeven} we can find $m\in \F_{q^6}^*$ such that $m^{q^3+q^4+q^5}=\sqrt{C/A}$.
We will show $\det M_0(m,b)=\det M_1(m,b)=0$. By $B=0$, i.e. $\Tr_{q^3/q}(N)=\N_{q^3/q}(N)$, we obtain
\[t^2=c/a=b^{q^4+q^3-q-1}(b^{q^4+q^3+q+1}+b^{q^5+q^4
	+q^2+q}+b^{q^5+q^3+q^2+1}+1).\]
As in \eqref{multi1} and \eqref{multi2}, substitution in 
$\det M_0(m,b)^2=G(m,m^q,m^{q^2},m^{q^3},m^{q^4},m^{q^5})^2$ and in
$\det M_1(m,b)^2=F_0(m,m^q,m^{q^2},m^{q^3},m^{q^4},m^{q^5})^2$, cf. Lemma \ref{mainlemma}, and applying $B=0$ several times, yields $\det M_0(m,b)=\det M_1(m,b)=0$ and hence $U_b$ is not scattered.
\item Suppose that  
\[B\neq 0, \quad N\in \mathbb{F}_{q^3}\setminus \mathbb{F}_q, \quad \Tr_{q^3/2}(AC/B^2)=0.\]
By Proposition  \ref{Prop_qeven} the roots of $\phi_b(T)$ are not $(q^2+q+1)$-th powers in $\mathbb{F}_{q^6}$ and so $U_b$ is scattered by Lemma \ref{mainlemma}.

\item Suppose that 
 $$B\neq 0, \quad N\in \mathbb{F}_{q^3}\setminus \{0,1\}, \quad \Tr_{q^3/2}(AC/B^2)=1.$$
By Proposition  \ref{Prop_qeven} the roots of $\phi_b(T)$ are  $(q^2+q+1)$-th powers in $\mathbb{F}_{q^6}$. Let $\gamma$ be a root of $\phi_b^{q^3}(T)$ and consider $m\in \mathbb{F}_{q^6}^*$ such that $m^{q^2+q+1}=\gamma$. 
Our aim is to prove $G(m,m^q,m^{q^2},m^{q^3},m^{q^4},m^{q^5})=F_0(m,m^q,m^{q^2},m^{q^3},m^{q^4},m^{q^5})=0$.	
To do this, it will be useful to express $\gamma, \gamma^q, \ldots, \gamma^{q^5}$ with respect to only one of these conjugated elements. By Lemma \ref{lemmanuovo},
\begin{equation}
\label{g2}
 \gamma^{q^2}=b^{q^2-1}(N+1)^{q+1} + b^{q^4-1}\gamma^q.
\end{equation}
Taking the $q$-th power in \eqref{g2} and applying again \eqref{g2} yields
\begin{eqnarray*}
\gamma^{q^3}&=&b^{q^3-q}(N^{q^2+q}+N^q+ N^{q^2} + 1) + b^{q^5-q}\gamma^{q^2}\\
&=&(N^{q^2+q}+N^{q+1}+N+N^{q^2}+b^{q^5+q^4}\gamma^q)/b^{q+1}.
\end{eqnarray*}
In a similar way we obtain
\[\gamma^{q^4}=(N^{q^2+q+1} + N+N^q+N^{q^2} + b^{q^5+q^4}\gamma^q)/b^{q^2+q},\]
\[\gamma^{q^5}=(N^{q^2+1}+N+ N^{q^2+q}+N^q +b^{q^5+q^4}\gamma^q)/b^{q^3+q^2},\]
\[\gamma=b^{q-q^3}(N+1)^{1+q^2}+b^{q^5-q^3}\gamma^q.\]
Now, in \eqref{multi1} after substituting $t^{q^i}$ with $\gamma^{q^{i+3}}$, we obtain \[b^{2+q+q^3}G(m,m^q,m^{q^2},m^{q^3},m^{q^4},m^{q^5})=\]
\[\phi_b^{q^4}(\gamma^q)(\phi_b^{q^4}(\gamma^q)+\gamma^q N^{q^2}(N+N^q)+b^{q+q^2}(N+1)^{q+1}(N+N^{q^2}))=0.\]
Similarly, substitution in \eqref{multi2} yields $F_0(m,m^q,m^{q^2},m^{q^3},m^{q^4},m^{q^5})=0$ and hence $U_b$ is not scattered.

\item Suppose that 
\[B\neq 0, \quad N\in \mathbb{F}_{q}\setminus \{0,1\}, \quad \Tr_{q^3/2}(AC/B^2)=0.\]
By Proposition  \ref{Prop_qeven} the roots of $\phi_b(T)$ are  $(q^2+q+1)$-powers in $\mathbb{F}_{q^6}$. Let $t=m^{q^3+q^4+q^5}\in \F_{q^6}^*$ be a root of $\phi_b(T)$. Then, as in Proposition \ref{Prop_qeven}, there exists $s\in \F_q$, a root of $\psi_b(T)$, such that $tA=sB \in \F_q$. Then $t^{q^i}=tA/A^{q^i}$ for $i=0,1,\ldots,5$. 
Substituting $t^{q^4}=t b^{q+1-q^5-q^4}$ and $t^{q^5}=t b^{q-q^5}$ in 
$\det M_1(m,b)=F_0(m,m^q,m^{q^2},m^{q^3},m^{q^4},m^{q^5})$ gives $b^{q^2}(N+1)^2$ which cannot be zero and hence $U_b$ is scattered.
\end{itemize}
\end{proof}


From now on, our aim is to calculate the number of norms $N$ so that  Conditions \eqref{Condition_qeven} are satisfied. One can easily check that
\begin{multline}
\label{vegefele}
\Tr_{q^3/2}(AC/B^2)=\Tr_{q/2} \left( \Tr_{q^3/q}(AC/B^2) \right)=\\
\Tr_{q/2} \left( \frac{(N^{q+1}+N^{q^2+1}+N^{q^2+q})(N+N^q+N^{q^2}+N^{q+1}+N^{q^2+1}+N^{q^2+q}+N^{q^2+q+1}+1)}
{(N^{q^2+q+1}+N+N^q+N^{q^2})^2}\right).
\end{multline}
Each $N\in \F_{q^3} \setminus \F_q$ is a root of multiplicity one of the irreducible polynomial
\[T^3+(N+N^q+N^{q^2})T^2+(N^{q+1}+N^{q^2+q}+N^{q^2+1})T+N^{q^2+q+1} \in \F_q[T].\]
This leads us to consider polynomials
\begin{equation}
\label{vegefele1}
F(T):=T^3+ST^2+RT+P\in \mathbb{F}_q[T],
\end{equation}
satisfying
\begin{equation}
\label{vegefele2}
P \neq S \quad \mbox{ and } \quad \Tr_{q/2} \left( \frac{R(S+P+R+1)}{(P+S)^2}\right)=0,
\end{equation}
or equivalently, polynomials of the form \eqref{vegefele1} so that
\begin{equation}
\label{Eq:Star_even}
P\neq S \quad \mbox{ and } \quad U^2+U+\frac{R(S+P+R+1)}{(P+S)^2}\in \mathbb{F}_q[U] \textrm{ has two roots in } \mathbb{F}_q.
 \end{equation}
Note that if $P=N^{1+q+q^2}$ and $S=N+N^q+N^{q^2}$, then $P\neq S$ yields $N\notin \{0,1\}$. 
We proceed similarly to the case $q$ odd. After rearranging \eqref{Eq:Star_even}, it is clear that we have to study $4$-tuples $(U,S,P,R)$ satisfying
\begin{equation}
\label{kiemel}
P \neq S \quad \mbox{ and } \quad (P+S)^2 U^2+(P+S)^2 U + R(S+P+R+1) =0.
\end{equation}
Note that $(u,s,p,r)$ satisfies the equation above if and only if $(u+1,s,p,r)$ satisfies it and for fixed $s,p,r$ there cannot be further solutions.

\begin{proposition}
	\label{Finally_even}
	The number of polynomials $F(T)=T^3+ST^2+RT+P\in \F_q[T]$ satisfying \eqref{Eq:Star_even} is $(q^3-q^2)/2$.
\end{proposition}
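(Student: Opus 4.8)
The plan is to count monic cubics $F(T)=T^3+ST^2+RT+P$, equivalently triples $(S,P,R)\in\F_q^3$, that satisfy \eqref{Eq:Star_even}. Since $q$ is even, for $a\in\F_q$ the Artin--Schreier polynomial $U^2+U+a$ has two (necessarily distinct, as it is separable) roots in $\F_q$ precisely when $\Tr_{q/2}(a)=0$, and is irreducible over $\F_q$ otherwise; hence \eqref{Eq:Star_even} is equivalent to asking that $P\neq S$ together with the trace condition \eqref{vegefele2}, namely
\[
\Tr_{q/2}\!\left(\frac{R(S+P+R+1)}{(P+S)^2}\right)=0.
\]
So I would fix $S,P\in\F_q$ with $c:=P+S\neq0$ and determine for how many $R\in\F_q$ this holds.

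For such a fixed pair, write the numerator as $R(S+P+R+1)=R(R+c+1)=R^2+(c+1)R$. Using the $\F_2$-linearity of the absolute trace and the identity $\Tr_{q/2}(x^2)=\Tr_{q/2}(x)$, together with the characteristic-two cancellation $\frac{R}{c}+\frac{(c+1)R}{c^2}=\frac{R}{c^2}$ (valid since $c+c=0$), one obtains
\[
\Tr_{q/2}\!\left(\frac{R^2+(c+1)R}{c^2}\right)=\Tr_{q/2}\!\left(\frac{R}{c}\right)+\Tr_{q/2}\!\left(\frac{(c+1)R}{c^2}\right)=\Tr_{q/2}\!\left(\frac{R}{c^2}\right).
\]
Thus the condition on $R$ collapses to $\Tr_{q/2}(R/c^2)=0$. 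Since $R\mapsto R/c^2$ is a bijection of $\F_q$ and exactly $q/2$ elements of $\F_q$ have zero absolute trace, there are exactly $q/2$ admissible values of $R$ for each of the $q^2-q$ pairs $(S,P)$ with $P\neq S$.

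Multiplying, the number of polynomials $F(T)$ satisfying \eqref{Eq:Star_even} is $(q^2-q)\cdot\frac{q}{2}=(q^3-q^2)/2$, as claimed; equivalently, in the language of \eqref{kiemel}, the $4$-tuples $(U,S,P,R)$ come in pairs $\{(u,\cdot),(u+1,\cdot)\}$ and there are $q^3-q^2$ of them. The argument is short, and the only step requiring any care is the trace simplification: one must use both the additivity of $\Tr_{q/2}$ and the identity $\Tr_{q/2}(x^2)=\Tr_{q/2}(x)$, and keep track of the characteristic-two cancellations; no genuine obstacle arises.
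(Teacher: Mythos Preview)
Your proof is correct. The trace simplification is clean: writing $c=P+S$, you correctly use $\Tr_{q/2}(x^2)=\Tr_{q/2}(x)$ to replace $R^2/c^2$ by $R/c$ inside the trace, and then the characteristic-two identity $Rc+(c+1)R=R$ collapses the argument to $R/c^2$, so the condition becomes a single linear trace condition on $R$, met by exactly $q/2$ values. This is a genuinely different argument from the paper's.

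The paper instead counts the quadruples $(U,S,P,R)$ satisfying the cleared equation \eqref{kiemel}: for each $(S,P)$ with $S\ne P$ it asserts that the equation defines a conic in $(R,U)$ with exactly $q$ affine points, and then halves the total since $(u,\cdot)$ and $(u+1,\cdot)$ pair up. Your route is more elementary and more explicit at this particular step; in fact your trace computation is what justifies the paper's otherwise unproven claim that the conic has exactly $q$ affine points. On the other hand, the paper's conic-counting framework is what it reuses, with more effort, in the subsequent Propositions \ref{Prop:Quadruples2_even}--\ref{Prop:Quadruples5_even}, where no such neat trace collapse is available; so its approach here is partly chosen for uniformity with what follows.
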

\begin{proof}
For each choice of the pair $(P,S)$, $P\neq S$, there are $q$ pairs $(R,U)$ such that
\eqref{kiemel} holds, these pairs correspond to affine points of an irreducible conic. The fact that it is irreducible follows easily by considering partial 
derivatives of the homogeneous equation for this conic. It turns out that at the points of this curve, the three partial derivatives cannot vanish at the same time, and hence the curve is non-singular.  
Then the number of quadruples $(U,S,R,P)$ satisfying \eqref{kiemel} is $q^3-q^2$. 
The number of polynomials $F(T)$ satisfying \eqref{Eq:Star_even} is the number of distinct triples $(S,R,P)\in \mathbb{F}_q^3$ such that \eqref{kiemel} is satisfied, which is
$(q^3-q^2)/2$. 
\end{proof}

\begin{proposition}
	\label{Prop:Quadruples2_even}
	The number of pairs $(t,F(T))$, where $F(T)$ satisfies \eqref{Eq:Star_even} and $t\in \F_q$ is a root of $F(T)$ is exactly $(q^3-q)/2$.
\end{proposition}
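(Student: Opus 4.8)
The plan is to mirror the argument of the $q$ odd case (Proposition~\ref{Prop:Quadruples2}), taking advantage of a simplification available in characteristic two. First I would fix $t\in\F_q$ and observe that, since a polynomial $F(T)=T^3+ST^2+RT+P$ has $t$ as a root precisely when $P=t^3+St^2+Rt$, a pair $(t,F(T))$ with $t$ a root of $F$ is the same datum as a triple $(t,S,R)\in\F_q^3$. Setting $D:=P+S=t^3+(t^2+1)S+tR$, the condition $P\neq S$ becomes $D\neq 0$, and \eqref{Eq:Star_even} becomes: $D\neq 0$ and the polynomial $U^2+U+R(D+R+1)/D^2$ has two roots in $\F_q$, i.e.
\[D\neq 0 \quad\text{and}\quad \Tr_{q/2}\!\left(\frac{R(D+R+1)}{D^2}\right)=0 .\]

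The decisive step is to simplify this trace condition. Expanding the argument,
\[\frac{R(D+R+1)}{D^2}=\frac{R}{D}+\left(\frac{R}{D}\right)^{2}+\frac{R}{D^{2}},\]
so from $\Tr_{q/2}(x^2)=\Tr_{q/2}(x)$ and additivity of the trace one gets
\[\Tr_{q/2}\!\left(\frac{R(D+R+1)}{D^2}\right)=\Tr_{q/2}\!\left(\frac{R}{D^{2}}\right).\]
Thus, for a fixed $t$, the pairs to be counted correspond exactly to the $(S,R)\in\F_q^2$ with $D\neq 0$ and $\Tr_{q/2}(R/D^{2})=0$. This is the point where the $q$ even case is genuinely easier than the $q$ odd one: the conic counting used there is replaced by a count of trace-zero elements.

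It remains to carry out that count, distinguishing $t\neq 1$ from $t=1$. If $t\neq 1$, the coefficient $t^2+1$ of $S$ in $D$ is nonzero, so $(S,R)\mapsto(D,R)$ is a bijection of $\F_q^2$; for each of the $q-1$ values $D\neq 0$, the map $R\mapsto R/D^2$ is a bijection of $\F_q$, hence exactly $q/2$ values of $R$ satisfy $\Tr_{q/2}(R/D^{2})=0$, giving $q(q-1)/2$ admissible pairs for this $t$. If $t=1$, then $D=1+R$ does not involve $S$; now $D\neq 0$ means $R\neq 1$, and since
\[\frac{R}{(1+R)^2}=\frac{1}{1+R}+\left(\frac{1}{1+R}\right)^{2},\]
we have $\Tr_{q/2}\!\left(R/(1+R)^2\right)=0$ for every $R\neq 1$, so all $q$ values of $S$ are allowed and $t=1$ contributes $q(q-1)$ pairs. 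Adding the contribution $q(q-1)$ from $t=1$ to the $q-1$ contributions $q(q-1)/2$ (one for each $t\neq 1$),
\[q(q-1)+(q-1)\cdot\frac{q(q-1)}{2}=\frac{q(q-1)(q+1)}{2}=\frac{q^{3}-q}{2},\]
which is the asserted count.

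As for difficulty: there is essentially no hard computation here once the trace identity $\Tr_{q/2}(R(D+R+1)/D^2)=\Tr_{q/2}(R/D^2)$ is in hand. The only things requiring care are organizational — correctly identifying a pair $(t,F(T))$ with a triple $(t,S,R)$ so that no pair is counted twice, translating $P\neq S$ into $D\neq 0$, and recalling (as in the remark preceding Proposition~\ref{Finally_even}) that $U^2+U+c$ having a root in $\F_q$ is equivalent to having two roots in $\F_q$, which is what makes \eqref{Eq:Star_even} equivalent to the stated trace condition.
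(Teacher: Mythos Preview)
Your proof is correct and takes a genuinely different, more elegant route than the paper. The paper works with the equivalent condition \eqref{kiemel}, counts $5$-tuples $(t,U,S,P,R)$ and divides by $2$ at the end; it splits into the cases $t=0$, $t=1$, $t\notin\{0,1\}$ and, in the last case, further into $U\in\{0,1\}$ and $U\notin\{0,1\}$, each time counting affine points on conics and carefully excluding the locus $P=S$. Your key observation
\[
\frac{R(D+R+1)}{D^2}=\frac{R}{D}+\left(\frac{R}{D}\right)^{2}+\frac{R}{D^{2}}
\]
collapses the trace condition to $\Tr_{q/2}(R/D^{2})=0$, after which the count becomes a straightforward linear-algebra fact about the size of a trace kernel, with only the mild case distinction $t=1$ versus $t\neq 1$. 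Both arguments are valid; yours avoids all the conic geometry and the $U$-bookkeeping, at the cost of relying on a small algebraic identity that one has to spot. The paper's approach has the minor advantage of being a direct parallel to the odd-$q$ argument, but yours is substantially shorter.
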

\begin{proof}
	We will frequently use $P=t^3+St^2+Rt$ to eliminate $P$. 
	
	If $t=0$ then $P=0$ and from \eqref{kiemel} one gets $(U^2+U)S^2+R(S+R+1)=0$. 
	\begin{itemize}
	\item If $U\notin\{0,1\}$ then there are precisely $q-1$ pairs $(S,R)\in\mathbb{F}_q^2$ such that $(U^2+U)S^2+R(S+R+1)=0$, two of them with $S=0$. So in total $q-3$ pairs such that $P\neq S$. 
	\item If $U=0$ or $U=1$ then there are $2q-1$ pairs $(S,R)$, two of them with $S=0$. So in total $2q-3$ pairs such that $P\neq S$. 
	\end{itemize}
	Summing up, if $t=0$ then the number of 
	$5$-tuples $(t,U,S,P,R)$ such that  $F(T)$ satisfies \eqref{Eq:Star_even} is
	$(q-2)(q-3)+2(2q-3)=q^2-q$.
	
	If $t=1$, then $P=1+S+R$ and from \eqref{kiemel} one gets $U(U+1)(R+1)^2=0$. 
	\begin{itemize}
		\item If $U\notin\{0,1\}$ then $R=1$ and hence $P=S$, so there are no $5$-tuples with the required properties. 
		\item If $U\in \{0,1\}$ then there are $q^2-q$ pairs $(S,P)$ with $P\neq S$, and each of these pairs uniquely determine $R$. 
	\end{itemize}
	Summing up, if $t=1$ then the number of $5$-tuples $(t,U,S,P,R)$ such that  $F(T)$ satisfies \eqref{Eq:Star_even} is $2(q^2-q)$.
	
Suppose now $t\notin \{0,1\}$.
	\begin{itemize}
	\item If $U \in \{0,1\}$ then the equation in \eqref{kiemel} reads $(t + 1)R(t^2+t+1+S(t+1)+R)=0$ which is satisfied exactly by $2q-1$ pairs $(R,S)$, corresponding to affine points of two intersecting affine line. We have $P=S$ if and only if $t^3+S(t^2+1)+Rt=0$ and hence when 
	$(S,R)\in \{(t,1),(t^3/(t^2+1),0)\}$. Thus the number of $5$-tuples $(t,U,S,P,R)$ such that $F(T)$ satisfies \eqref{Eq:Star_even} is $2(q-2)(2q-3)$. 
	\item If $U \notin\{0,1\}$ then the equation in \eqref{kiemel} defines the affine part of a conic $\cC$ in the variables $S=:S'/Z$ and $R=:R'/Z$. The homogenous equation in $S',R',Z$ of $\cC$ is
\begin{eqnarray*}
R'^2(1+t+t^2U+t^2U^2)+S'^2(U+t^4U+U^2+t^4U^2)\\+R'S'(1+t^2)+Z^2(t^6U+t^6U^2)+R'Z(1+t^3).
\end{eqnarray*}
Its partial derivatives are zero only at the point $(1+t+t^2,0,1+t)$. Substituting these values in the equation above yields $(1+t^2)U(U+1)$ which is non-zero and hence $\cC$ is irreducible.

	Substituting $Z=0$ yields
	\[(S'(U t^2+U)+R'(Ut+1))(S'(Ut^2+U+t^2+1)+R'(Ut+t+1))=0,\]
	and hence $\cC$ has two distinct  $\mathbb{F}_q$-rational points at infinity.
	
It remains to calculate the number of affine points of $\cC$ on the affine line 
$t^3+S(t^2+1)+R=0$ (whose points correspond to solutions with $P=S$). 
Substitution of $R$ in \eqref{kiemel} gives
	\[(t^2+1)(t^3+S(t^2+1))(t^3 U^2 + t^3 U + t^2+1 + S(t^2 U^2 + t^2U+t+U^2+U))=0.\]
It is easy to see that the two solutions in $S$ are distinct and hence in this case 
the number of $5$-tuples $(t,U,S,P,R)$ such that $F(T)$ satisfies \eqref{Eq:Star_even} is $(q-2)(q-2)(q-3)$. 
	\end{itemize}

The total number of pairs $(t,F(T))$ is the number of 
$4$-tuples $(t,S,P,R)$ such that $F(T)$ satisfies \eqref{Eq:Star_even}, which is half of the number of $5$-tuples $(t,U,S,P,R)$ with the same property, i.e. 
\[\frac{1}{2}((q^2-q) +2(q^2-q)+2(q-2)(2q-3)+(q-2)(q-2)(q-3)))=\frac{q^3-q}{2}.\]
\end{proof}

\begin{lemma}
\label{FF}
For $\alpha, \beta, \gamma \in \F_q$, $\alpha\neq 0$, the number of pairs $(X,Y)\in \F_q^2$, $X\neq \gamma$, such that $Y^2+Y=\alpha (X+1)(X+\beta)/(X^2+\gamma^2)$ and
\begin{equation}
\label{FF_2}
\alpha(\beta^2+1)\neq (\gamma+\beta)(\gamma+1),
\end{equation}
is either $q$ or $q-2$ depending on $\Tr_{q/2}(\alpha)=1$ or $\Tr_{q/2}(\alpha)=0$, respectively.
\end{lemma}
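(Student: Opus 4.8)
The plan is to fix $X$ and reduce the Artin--Schreier-type equation in $Y$ to a trace condition, then linearise that condition by a rational substitution so that \eqref{FF_2} appears as the natural nondegeneracy hypothesis. Since $q$ is even, $X^{2}+\gamma^{2}=(X+\gamma)^{2}$, so for $X\neq\gamma$ the quantity $c(X):=\alpha(X+1)(X+\beta)/(X+\gamma)^{2}$ is well defined and $Y^{2}+Y=c(X)$ has two solutions $Y\in\F_q$ if $\Tr_{q/2}(c(X))=0$ and none otherwise. Hence the desired number of pairs is $2\cdot\#\{X\in\F_q\setminus\{\gamma\}:\Tr_{q/2}(c(X))=0\}$, and it suffices to count such $X$.

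First I would substitute $Z=(X+\gamma)^{-1}$, a bijection of $\F_q\setminus\{\gamma\}$ onto $\F_q^{*}$. Writing $X=\gamma+Z^{-1}$ and clearing denominators gives
\[
c=\alpha\big((\gamma+1)Z+1\big)\big((\gamma+\beta)Z+1\big)=\alpha(\gamma+1)(\gamma+\beta)Z^{2}+\alpha(1+\beta)Z+\alpha .
\]
Since Frobenius is bijective, let $\mu\in\F_q$ be the unique square root of $\alpha(\gamma+1)(\gamma+\beta)$; then $\Tr_{q/2}(\alpha(\gamma+1)(\gamma+\beta)Z^{2})=\Tr_{q/2}((\mu Z)^{2})=\Tr_{q/2}(\mu Z)$, so
\[
\Tr_{q/2}(c)=\Tr_{q/2}(\alpha)+\Tr_{q/2}(LZ),\qquad L:=\alpha(1+\beta)+\mu .
\]
The role of \eqref{FF_2} is exactly to force $L\neq0$: squaring, and using $\alpha\neq0$ together with $(1+\beta)^{2}=1+\beta^{2}$, one gets $L=0$ iff $\alpha(1+\beta^{2})=(\gamma+\beta)(\gamma+1)$, which \eqref{FF_2} excludes.

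Assuming $L\neq0$, the map $Z\mapsto W:=LZ$ permutes $\F_q^{*}$, so
\[
\#\{X\neq\gamma:\Tr_{q/2}(c(X))=0\}=\#\{W\in\F_q^{*}:\Tr_{q/2}(W)=\Tr_{q/2}(\alpha)\}.
\]
As $\Tr_{q/2}\colon\F_q\to\F_2$ is surjective and $\F_2$-linear, each of its two fibres has size $q/2$; deleting $W=0$ (which lies in the fibre over $0$) leaves $q/2-1$ elements of trace $0$ and $q/2$ elements of trace $1$ in $\F_q^{*}$. Therefore the number of admissible $X$ is $q/2-1$ when $\Tr_{q/2}(\alpha)=0$ and $q/2$ when $\Tr_{q/2}(\alpha)=1$; doubling gives $q-2$ and $q$ respectively, as claimed.

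The only subtle point is the bookkeeping in the degenerate direction: one must track that the substitution deletes precisely the single value $Z=0$, that this value has trace $0$, and that \eqref{FF_2} is precisely the condition $L\neq0$ making $LZ$ sweep out all of $\F_q^{*}$ --- without it the linear form would be constant and the two counts would collapse to $2q-2$ and $0$. Everything else is routine characteristic-$2$ algebra: the identities $X^{2}+\gamma^{2}=(X+\gamma)^{2}$, $(1+\beta)^{2}=1+\beta^{2}$, and $\Tr_{q/2}(w^{2})=\Tr_{q/2}(w)$.
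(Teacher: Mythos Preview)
Your proof is correct. Both arguments begin with the same substitution $Z=1/(X+\gamma)$ (the paper calls it $W$), but then diverge: the paper treats $Y^{2}+Y=\alpha+\alpha(\beta+1)W+\alpha(\gamma+\beta)(\gamma+1)W^{2}$ as the affine part of a conic, checks irreducibility via partial derivatives (this is where \eqref{FF_2} enters for them), observes it has a unique rational point at infinity and hence $q$ affine points, and then subtracts the points with $W=0$. You instead fix $X$ first, reduce to the Artin--Schreier count, and linearise the trace condition by absorbing the quadratic term via $\Tr(\mu^{2}Z^{2})=\Tr(\mu Z)$; condition \eqref{FF_2} then appears transparently as $L\neq0$. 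Your route is more elementary---no conic geometry, no irreducibility check, no homogenisation---and it makes the role of the hypothesis \eqref{FF_2} sharper: it is literally the nondegeneracy of a single linear form. The paper's conic argument, by contrast, packages both the $Y$-count and the $X$-count into one point-counting step, which is perhaps more uniform with the surrounding propositions that all argue via conics.
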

\begin{proof}
	First note that
	\[\alpha \frac{(X+1)(X+\beta)}{(X^2+\gamma^2)}=\alpha\left(1+\frac{\beta+1}{X+\gamma}+\frac{(\gamma+\beta)(\gamma+1)}{X^2+\gamma^2}\right).\]
	Introduce the variable $W:=1/(X+\gamma)$. Then our equation becomes
	\[Y^2+Y=\alpha+\alpha(\beta+1)W+\alpha(\gamma+\beta)(\gamma+1)W^2,\]
	which can be considered as the affine part of a conic $\cC$ in the variables 
$W=:W'/Z$ and $Y=:Y'/Z$. The partial derivatives of the homogenous equation of $\cC$ are zero only at the point $(a(1+b),1,0)$, which is not a point of $\cC$ because of the assumption \eqref{FF_2}. It is also easy to see that $\cC$ has a unique point at infinity which is $\mathbb{F}_q$-rational. 
	 The number of affine $\mathbb{F}_q$-rational points of $\cC$ with $W=0$ is $2$ when $\Tr_{q/2}(\alpha)=0$ and zero otherwise. For each solution $(W,Y)$ with $W\neq 0$ there corresponds a unique solution $(X,Y)$ with $X=1/W+\gamma$ and hence the result follows.
\end{proof}

\begin{proposition}
	\label{Prop:Quadruples3_even}
	The number of polynomials $F(T)$, where $F(T)$ satisfies \eqref{Eq:Star_even} and $F(T)=(T+K)(T+L)(T+L^q)$ for some $K\in \F_q$, $L\in \F_{q^2}\setminus \F_q$, is exactly 
	\[\frac{q^3-3q^2+4q}{4}.\]
\end{proposition}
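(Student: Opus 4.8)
The plan is to parametrize the polynomials in question by pairs $(K,L)$ with $K\in\F_q$ and $L\in\F_{q^2}\setminus\F_q$, so that $F(T)=(T+K)(T+L)(T+L^q)$. Since $L\notin\F_q$ we have $L\neq L^q$ and $K$ is the unique root of $F$ lying in $\F_q$, so the map $(K,L)\mapsto F$ is exactly $2$-to-$1$ (the pairs $(K,L)$ and $(K,L^q)$ produce the same $F$); hence the quantity sought equals half the number of admissible pairs $(K,L)$. Writing $\tau:=\Tr_{q^2/q}(L)=L+L^q$ and $\nu:=\N_{q^2/q}(L)=L^{q+1}$ we get $S=K+\tau$, $R=K\tau+\nu$, $P=K\nu$, and a short computation (applying $1+L+L^q+L^{q+1}=(1+L)(1+L^q)$ twice) gives
\[S+P=K(1+\nu)+\tau,\qquad S+P+R+1=(K+1)(1+\tau+\nu).\]
Here $\tau\neq0$ because $L\notin\F_q$, $\nu\neq0$ because $L\neq0$, and $1+\tau+\nu=(1+L)(1+L^q)\neq0$ because $L\neq1$.

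First I would dispose of the case $\nu=1$ (equivalently $L^{q+1}=1$); since the kernel of $\N_{q^2/q}$ has order $q+1$ and meets $\F_q^*$ only in $1$, this happens for exactly $q$ values of $L$ in $\F_{q^2}\setminus\F_q$. Here $S+P=\tau\neq0$, so the constraint $P\neq S$ is automatic, and one finds $R(S+P+R+1)/(S+P)^2=(K\tau+1)(K+1)/\tau$; using $\Tr_{q/2}(K^2)=\Tr_{q/2}(K)$, the trace $\Tr_{q/2}$ of this expression equals $\Tr_{q/2}((K+1)/\tau)$, which vanishes for exactly $q/2$ values of $K$. Thus this case contributes $q\cdot q/2=q^2/2$ admissible pairs.

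For the main case $\nu\neq1$ (so $1+\nu\neq0$) I would rewrite
\[\frac{R(S+P+R+1)}{(S+P)^2}=\alpha\,\frac{(K+1)(K+\beta)}{(K+\gamma)^2},\qquad \alpha=\frac{\tau(1+\tau+\nu)}{(1+\nu)^2},\quad\beta=\frac{\nu}{\tau},\quad\gamma=\frac{\tau}{1+\nu},\]
observing that $K=\gamma$ is precisely the excluded case $P=S$ and that $\alpha\neq0$. Two algebraic identities then do the work: first, $\alpha=\gamma^2+\gamma$, whence $\Tr_{q/2}(\alpha)=0$ for \emph{every} such $L$; second, the hypothesis \eqref{FF_2} of Lemma \ref{FF}, evaluated at $X=K$ with the $\alpha,\beta,\gamma$ above, reduces after cancelling the factor $1+\tau+\nu\neq0$ to the condition $\nu\neq0$, hence always holds. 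So Lemma \ref{FF} applies and yields exactly $q-2$ pairs $(K,Y)\in\F_q^2$ with $K\neq\gamma$ and $Y^2+Y=\alpha(K+1)(K+\beta)/(K+\gamma)^2$; since $Y^2+Y=c$ has $0$ or $2$ solutions for each $c\in\F_q$, this means exactly $(q-2)/2$ admissible values of $K$ for each of the $q^2-2q$ values of $L$ with $\nu\neq1$, contributing $(q^2-2q)(q-2)/2$ admissible pairs.

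Adding the two contributions gives $(q^2-2q)(q-2)/2+q^2/2=(q^3-3q^2+4q)/2$ admissible pairs $(K,L)$, and dividing by $2$ yields the claimed value $(q^3-3q^2+4q)/4$. I expect the only genuinely delicate steps to be the two identities in the main case — that $\alpha=\gamma^2+\gamma$ (which forces $\Tr_{q/2}(\alpha)=0$) and that \eqref{FF_2} never fails — together with keeping the count straight of how many $L\in\F_{q^2}\setminus\F_q$ have norm $1$; everything else is a routine substitution into Lemma \ref{FF}.
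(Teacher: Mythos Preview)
Your proposal is correct and follows essentially the same approach as the paper: split into the cases $\nu=L^{q+1}=1$ and $\nu\neq 1$, in the latter case set up $\alpha,\beta,\gamma$ so as to invoke Lemma~\ref{FF}, verify condition~\eqref{FF_2} and that $\Tr_{q/2}(\alpha)=0$, then count. Your observation that $\alpha=\gamma^2+\gamma$ is a crisp way to get $\Tr_{q/2}(\alpha)=0$ (the paper writes out the same identity term by term), and your reduction of \eqref{FF_2} to $\nu\neq 0$ matches the paper's ``Condition \eqref{FF_2} is satisfied'' without further detail.
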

\begin{proof}
Since $F(T)=T^3+ST^2+RT+P$, we have $S=K+L+L^q$, $R=KL+KL^q+L^{q+1}$ and $P=KL^{q+1}$. 
Note that $P\neq S$ is equivalent to $K (L^{q+1}+1)\neq (L+L^q)$ and in this case the equation in \eqref{Eq:Star_even} reads
	\begin{equation}
	\label{Eq_t3_even}
U^2+U=\frac{(L + 1)^{q+1}(K + 1)((L+L^q)K + L^{q+1})}{((L^{q+1}+1)K+ L + L^q)^2}.
	\end{equation}
First suppose $L^{q+1}=1$. Since $L\notin \F_q$, in this case $S\neq P$ holds and 
	\eqref{Eq_t3_even} reads 
	\[( L + L^q)^2(U^2+U)=(K + 1)((L+L^q)K + 1).\]
	Each solution $(K,U)$ corresponds to an affine point of an irreducible conic with a unique ideal point and hence there are $q$ such pairs which yields $q/2$ possible values for $K$. Since $L$ can be chosen in $q$ different ways, in total we have $q^2/2$ different  pairs $(K,L)$ such that $F(T)$ satisfies \eqref{Eq:Star_even}.
	
From now on, assume $L^{q+1}\neq 1$. Then, since $L\notin \F_q$, we can define  $\alpha=\frac{(L+1)^{q+1}(L+L^q)}{(L^{q+1}+1)^2}$, $\beta=\frac{L^{q+1}}{L^q+L}$ and $\gamma =\frac{L+L^q}{L^{q+1}+1}$ to apply Lemma \ref{FF}. Condition \eqref{FF_2} is satisfied and clearly $\alpha\neq 0$. 
Note that $S\neq P$ is equivalent to $\gamma \neq K$ and hence to determine
the number of pairs $(K,U)\in \F_q^2$ satisfying \eqref{Eq_t3_even} we can apply Lemma \ref{FF}. Note that
	\[\Tr_{q/2}(\alpha)=
	\Tr_{q/2}\left( \frac{(L^{q+1}+1+L^q+L)(L+L^q)}{(L^{q+1}+1)^2}\right)=\Tr_{q/2}\left(\frac{L+L^q}{L^{q+1}+1}+ \frac{(L+L^q)^2}{(L^{q+1}+1)^2}\right)=0.\]
It follows that for each $L$ there are $q-2$ pairs $(K,U)$ satisfying \eqref{Eq_t3_even}  and hence $(q-2)/2$ possible values for $K$. There are $q^2-2q$ choices for $L$ in $\F_{q^2}\setminus \F_q$ such that $L^{q+1}\neq 1$. In total we have $(q-2)(q^2-2q)/2$ different  pairs $(K,L)$ such that $F(T)$ satisfies \eqref{Eq:Star_even}.

Since $(K,L,L^q)$ and $(K,L^q,L)$ give rise to the same polynomial $F(T)=(T+K)(T+L)(T+L^q)$, in total the number of polynomials $F(T)$ satisfying \eqref{Eq:Star_even} is
\[\frac{1}{2}\left(\frac{q^2}{2}+\frac{(q^2-2q)(q-2)}{2}\right)=\frac{q^3-3q^2+4q}{4}.\]
\end{proof}

\begin{proposition}
	\label{Prop:Quadruples4_even}
	The number of polynomials $F(T)=(T+K)^3$, $K\in \mathbb{F}_q$, satisfying \eqref{Eq:Star_even} is  
	\[ \frac{q-2\gcd(n,2)}{2}.\]
\end{proposition}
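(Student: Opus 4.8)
The plan is to substitute $F(T)=(T+K)^3$ directly into \eqref{Eq:Star_even} and reduce the whole problem to a trace count over $\mathbb{F}_q$. Write $q=2^n$. Expanding $(T+K)^3=T^3+KT^2+K^2T+K^3$ in characteristic $2$, we read off $S=K$, $R=K^2$, $P=K^3$. The requirement $P\neq S$ becomes $K^3+K=K(K+1)^2\neq 0$, i.e. $K\notin\{0,1\}$.

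The first step is to evaluate the fraction appearing in \eqref{Eq:Star_even} for these values. Using $S+P+R+1=K^3+K^2+K+1=(K+1)^3$ and $P+S=K^3+K=K(K+1)^2$ (both valid since $q$ is even), one obtains
\[
\frac{R(S+P+R+1)}{(P+S)^2}=\frac{K^2(K+1)^3}{K^2(K+1)^4}=\frac{1}{K+1}.
\]
Hence, by \eqref{Eq:Star_even}, the polynomial $(T+K)^3$ satisfies the required condition precisely when $K\notin\{0,1\}$ and $U^2+U+\tfrac{1}{K+1}$ splits over $\mathbb{F}_q$, that is, when $\Tr_{q/2}\!\left(\tfrac{1}{K+1}\right)=0$.

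Next I would change variables. As $K$ runs over $\mathbb{F}_q\setminus\{0,1\}$, the element $W:=1/(K+1)$ is well defined (since $K\neq 1$) and runs bijectively over $\mathbb{F}_q^{*}\setminus\{1\}$, the value $W=1$ being excluded exactly by $K\neq 0$. So the number we want equals the number of $W\in\mathbb{F}_q^{*}\setminus\{1\}$ with $\Tr_{q/2}(W)=0$. Since $\Tr_{q/2}\colon\mathbb{F}_q\to\mathbb{F}_2$ is $\mathbb{F}_2$-linear and surjective, exactly $q/2$ elements of $\mathbb{F}_q$ have trace $0$. Of the two elements we must discard, $W=0$ always has trace $0$, while $\Tr_{q/2}(1)$ equals $n\bmod 2$, so $W=1$ has trace $0$ if and only if $n$ is even. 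Therefore the count is $q/2-1$ when $n$ is odd and $q/2-2$ when $n$ is even, which in both cases equals $\tfrac{q-2\gcd(n,2)}{2}$.

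The computation is essentially routine; the only step requiring genuine care is this final bookkeeping, where the parity of $n=\log_2 q$ — equivalently the value $\Tr_{q/2}(1)$ — determines whether $W=1$ is among the trace-zero elements, and it is precisely this dependence that produces the $\gcd(n,2)$ in the statement.
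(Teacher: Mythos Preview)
Your proof is correct and follows essentially the same approach as the paper: substitute $S=K$, $R=K^2$, $P=K^3$, reduce \eqref{Eq:Star_even} to $U^2+U=\tfrac{1}{K+1}$, and count the admissible $K$ via the trace. Your change of variable $W=1/(K+1)$ and explicit analysis of $\Tr_{q/2}(1)$ simply spell out the final count that the paper states in one line.
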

\begin{proof}
	Since $F(T)=T^3+ST^2+RT+P$, we have $S=K$, $R=K^2$, and $P=K^3$. 
	Note that $K\notin \{0,1\}$ otherwise $S=P$. Then \eqref{Eq:Star_even} reads
	\begin{equation}
U^2+U=\frac{1}{K+1},
	\end{equation}
which has two roots in $U$ for $(q-4)/2$ distinct values of $K\in \F_q \setminus \{0,1\}$ if $n$ is even, and $(q-2)/2$ distinct values of $K\in \F_q \setminus \{0,1\}$ if $n$ is odd.
\end{proof}

\begin{proposition}
	\label{Prop:Quadruples5_even}
	The number of polynomials $F(T)=(T+K)(T+L)^2$, $K, L\in \mathbb{F}_q$, $K\neq L$, which satisfy \eqref{Eq:Star_even} is  
	\[\frac{q^2-3q+2+2\gcd(n,2)}{2}.\]
\end{proposition}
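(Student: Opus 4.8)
The plan is to turn \eqref{Eq:Star_even} into a trace condition in $\F_q$ and then count. Since $q$ is even, $(T+L)^2=T^2+L^2$, so
\[F(T)=(T+K)(T+L)^2=T^3+KT^2+L^2T+KL^2,\]
giving $S=K$, $R=L^2$, $P=KL^2$. Then $P+S=K(L+1)^2$ and $S+P+R+1=(K+1)(L+1)^2$, hence the fraction appearing in \eqref{Eq:Star_even} equals $\frac{L^2(K+1)}{K^2(L+1)^2}$. Thus, together with the hypothesis $K\neq L$, the conditions to impose are $K\neq 0$ and $L\neq 1$ (which together are equivalent to $P\neq S$) and $\Tr_{q/2}\!\left(\frac{L^2(K+1)}{K^2(L+1)^2}\right)=0$. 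I would first dispose of the degenerate value $L=0$: then the fraction vanishes, the trace condition holds automatically, and only $K\neq 0$ remains, so this case contributes $q-1$ polynomials. Note that a polynomial $(T+K)(T+L)^2$ with $K\neq L$ determines $(K,L)$ uniquely (the double root is $L$, the simple root $K$), so there will be no overcounting, and the polynomials with $L=0$ are disjoint from those with $L\neq 0$.

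For $L\in\F_q\setminus\{0,1\}$ I would simplify the trace using $\Tr_{q/2}(z)=\Tr_{q/2}(z^2)$ together with the fact that squaring is a bijection of $\F_q$: writing $c:=L^2/(L+1)^2$,
\[\Tr_{q/2}\!\left(c\Big(\tfrac{1}{K}+\tfrac{1}{K^2}\Big)\right)=\Tr_{q/2}\!\left(\frac{c+c^{1/2}}{K}\right)=\Tr_{q/2}\!\left(\frac{L}{K(L+1)^2}\right),\]
since $c+c^{1/2}=\frac{L^2+L(L+1)}{(L+1)^2}=\frac{L}{(L+1)^2}$. As $K$ ranges over $\F_q^*$ the argument of the trace ranges bijectively over $\F_q^*$, and $\F_q^*$ contains exactly $q/2-1$ elements of trace $0$, so $q/2-1$ values of $K\in\F_q^*$ satisfy the trace condition. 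Among them one must still discard $K=L$ when it occurs, which happens precisely when $\Tr_{q/2}\!\left(\frac{L}{L(L+1)^2}\right)=\Tr_{q/2}\!\left(\frac{1}{L+1}\right)=0$. Hence for a given such $L$ the number of admissible $K$ is $q/2-2$ if $\Tr_{q/2}(1/(L+1))=0$ and $q/2-1$ otherwise.

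It then remains to count the $L\in\F_q\setminus\{0,1\}$ in each class. With $w:=1/(L+1)$, which runs bijectively over $\F_q^*\setminus\{1\}$, the number of $L$ with $\Tr_{q/2}(1/(L+1))=0$ equals the number of $w\in\F_q^*\setminus\{1\}$ with $\Tr_{q/2}(w)=0$; since $\F_q^*$ has $q/2-1$ elements of trace $0$ and $\Tr_{q/2}(1)=n\bmod 2$ for $q=2^n$, this number is $q/2-\gcd(n,2)$, and the complementary class has $(q-2)-(q/2-\gcd(n,2))=q/2-2+\gcd(n,2)$ elements. Summing, the contribution of the values $L\neq 0,1$ is
\[\Big(\tfrac q2-\gcd(n,2)\Big)\Big(\tfrac q2-2\Big)+\Big(\tfrac q2-2+\gcd(n,2)\Big)\Big(\tfrac q2-1\Big)=\frac{q^2}{2}-\frac{5q}{2}+2+\gcd(n,2),\]
and adding the $q-1$ polynomials with $L=0$ gives $\frac{q^2}{2}-\frac{3q}{2}+1+\gcd(n,2)=\frac{q^2-3q+2+2\gcd(n,2)}{2}$, as claimed.

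The computation itself is routine; the only delicate point — and the only real source of error — is the bookkeeping of the small exclusions: the degenerate case $L=0$, the exclusion $K=L$, and the value of $\Tr_{q/2}(1)$. All three affect only the constant term, and it is exactly the parity of $\Tr_{q/2}(1)$ that produces the $\gcd(n,2)$ in the statement, so this is where I would double-check most carefully (for instance by verifying the count directly for $q=4$ and $q=8$).
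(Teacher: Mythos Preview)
Your proof is correct and takes a genuinely different, cleaner route than the paper. The paper keeps the auxiliary variable $U$ and counts triples $(K,L,U)$ satisfying $K^2(L+1)^2(U^2+U)=L^2(K+1)$ by interpreting this, for each fixed $U$, as an affine conic in $(K,V)$ with $V=L^2/(L+1)^2$; after a fairly long case analysis it finds $(q^2-2q+2)/2$ pairs $(K,L)$ and then subtracts the diagonal $K=L$ by invoking the previous proposition on $(T+K)^3$. Your key move is to bypass $U$ entirely and exploit $\Tr_{q/2}(z^2)=\Tr_{q/2}(z)$ to collapse the expression $c/K+c/K^2$ to $(c+c^{1/2})/K=\dfrac{L}{K(L+1)^2}$, turning the condition into a linear one in $1/K$; this makes the count over $K$ immediate and lets you handle the exclusion $K=L$ directly via $\Tr_{q/2}(1/(L+1))$. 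Your argument is self-contained (it does not need the $(T+K)^3$ count as an input) and avoids all conic geometry; the paper's approach has the mild advantage of being uniform with the method used in the surrounding propositions.
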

\begin{proof}
Since $F(T)=T^3+ST^2+RT+P$, we have $S=K$, $R=L^2$, and $P=KL^2$. 
Note that $K\neq 0$ and $L\neq 1$ otherwise $S=P$. Then from the equation in \eqref{kiemel} one gets
	\begin{equation}
	\label{minnya}
K^2(L^2+1)(U^2+U)=L^2(K+1).
	\end{equation}
If $U\in \{0,1\}$, then $K=1$ and $L\in \F_q \setminus \{1\}$, or $L=0$ and $K\in \F_q \setminus \{0\}$. The pair $(K,L)=(1,0)$ is counted twice, so in total in this case there are 
$2q-3$ pairs $(K,L)$ satisfying $K\neq 0$, $L\neq 1$ and \eqref{minnya}.

From now on suppose $U\notin \{0,1\}$. We have $L\neq 1$ so we can put $V:=L^2/(L^2+1)$. Then $V\neq 1$ and $L^2=V/(V+1)$. Each solution $(K,V)$ of $K^2(U^2+U)=V(K+1)$ corresponds to an affine point of an irreducible conic with $2$ $\mathbb{F}_q$-rational points at infinity.
There is one affine solution with $K=0$ and $2$ affine solutions with $V=1$.
It follows that for each $U$ there are $q-4$ pairs of solutions with $K\neq 0$ and $V\neq 1$, hence the same number of pairs of solutions $(K,L)$ with $K\neq 0$ and $L\neq 1$.

For $U$ and $U+1$ there corresponds the same pair of solution $(K,L)$ and hence in total 
there are 
\[\frac{2(2q-3)+(q-2)(q-4)}{2}=\frac{q^2-2q+2}{2}\]
suitable pairs $(K,L)$. By Proposition \ref{Prop:Quadruples5_even},  $(q-2\gcd(n,2))/2$ of them have $K=L$. 
After subtracting this number we obtain the number of polynomials $F(T)$ satisfying \eqref{Eq:Star_even}.
\end{proof}

\begin{theorem}
\label{qeven:Conto}
If $q$ is even then $|\Gamma|=\frac12 (q^3-q^2-q-2)$.
\end{theorem}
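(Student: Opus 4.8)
The plan is to run the same strategy as in the $q$ odd case (Theorem~\ref{Th:Conto}): reduce the computation of $|\Gamma|$ to counting monic cubics over $\F_q$ satisfying \eqref{Eq:Star_even}, sort those cubics by their factorization type, read off the type counts from the five enumeration Propositions~\ref{Finally_even}, \ref{Prop:Quadruples2_even}, \ref{Prop:Quadruples3_even}, \ref{Prop:Quadruples4_even} and \ref{Prop:Quadruples5_even}, and close the bookkeeping with two global identities.

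First I would record that the criterion of Theorem~\ref{q_even} depends only on $N$. Indeed $A^{q^3+1}=(b^{q+1})^{q^3+1}=N^{q+1}$, so $AC/B^2=N^{q+1}(N+1)^{q^2+q+1}/B^2\in\F_{q^3}$, while $B=N^{q^2+q+1}+N+N^q+N^{q^2}$ is already a function of $N$. Since $\N_{q^6/q^3}\colon\F_{q^6}^*\to\F_{q^3}^*$ is surjective, it follows that $\Gamma=\{N\in\F_{q^3}^*\setminus\{1\}:B\ne0,\ \Tr_{q^3/2}(AC/B^2)=0\}$, so I only have to count these $N$, splitting according to whether $N\in\F_q$ or $N\in\F_{q^3}\setminus\F_q$.

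For $N\in\F_q\setminus\{0,1\}$ one has $B=N(N+1)^2\ne0$ and $AC/B^2=1/(N+1)$, so the condition reads $\Tr_{q/2}(1/(N+1))=0$; equivalently these $N$ are exactly the $K\in\F_q$ for which $F(T)=(T+K)^3$ satisfies \eqref{Eq:Star_even}, and by Proposition~\ref{Prop:Quadruples4_even} there are $\delta_q:=(q-2\gcd(n,2))/2$ of them. For $N\in\F_{q^3}\setminus\F_q$, $N$ is a simple root of the irreducible cubic $F_N(T)=T^3+ST^2+RT+P$ of \eqref{vegefele1} with $S=\Tr_{q^3/q}(N)$ and $P=\N_{q^3/q}(N)$, the three conjugates $N,N^q,N^{q^2}$ all producing $F_N$; since $B=P+S$ and $N\notin\{0,1\}$ automatically, the conditions on $N$ amount precisely to $F_N$ satisfying \eqref{Eq:Star_even}. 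Hence the number of such $N$ equals $3\gamma_0$, where $\gamma_0$ is the number of irreducible monic cubics over $\F_q$ satisfying \eqref{Eq:Star_even}.

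To obtain $\gamma_0$ I would let $a,b,c,d$ denote the numbers of monic cubics satisfying \eqref{Eq:Star_even} that factor, respectively, as (linear)$\cdot$(irreducible quadratic), as a product of three distinct linear polynomials, as (linear)$^2\cdot$(a different linear), and as (linear)$^3$. Counting incidences $(t,F)$ with $t\in\F_q$ a root of $F$ gives $a+3b+2c+d=(q^3-q)/2$ by Proposition~\ref{Prop:Quadruples2_even}, and counting all such cubics gives $\gamma_0+a+b+c+d=(q^3-q^2)/2$ by Proposition~\ref{Finally_even}. Propositions~\ref{Prop:Quadruples3_even}, \ref{Prop:Quadruples5_even} and \ref{Prop:Quadruples4_even} supply $a=(q^3-3q^2+4q)/4$, $c=(q^2-3q+2+2\gcd(n,2))/2$ and $d=\delta_q=(q-2\gcd(n,2))/2$. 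Solving the two identities for $b$ and then for $\gamma_0$ yields $\gamma_0=(q^3-q^2-2q-2+2\gcd(n,2))/6$, whence
\[|\Gamma|=3\gamma_0+\delta_q=\frac{q^3-q^2-2q-2+2\gcd(n,2)}{2}+\frac{q-2\gcd(n,2)}{2}=\frac12\bigl(q^3-q^2-q-2\bigr),\]
the $\gcd(n,2)$ terms cancelling. The part I expect to require the most care is not the final arithmetic but the dictionary between norms and polynomials: checking that $B\ne0$ corresponds exactly to $P\ne S$, that $AC/B^2$ is genuinely a function of $N$ alone, and that the five propositions are matched to the correct factorization types (with the right counting multiplicity for each), so that no cubic is over- or under-counted in the passage from $N$ to $F_N$.
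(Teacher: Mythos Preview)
Your proof is correct and follows essentially the same route as the paper's own argument: reduce to counting monic cubics over $\F_q$ satisfying \eqref{Eq:Star_even}, separate by factorization type, feed in Propositions~\ref{Finally_even}--\ref{Prop:Quadruples5_even}, and assemble $|\Gamma|=3\gamma_0+\delta_q$. Your $a,b,c,d$ are exactly the paper's $\gamma_1-\delta_q,\gamma_3,\gamma_2,\delta_q$, and your preliminary check that $AC/B^2$ depends only on $N$ (with $B=P+S$ and the $N\in\F_q$ case reducing to $\Tr_{q/2}(1/(N+1))=0$) is precisely the content of \eqref{vegefele}--\eqref{vegefele2}, just made more explicit.
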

\begin{proof}
Finally, we will count the number of norms satisfying \eqref{Condition_qeven}. 
For $i\in \{0,1,2,3\}$, denote by $\gamma_i$ the number of polynomials 
$F(T)=T^3+ST^2+RT+P \in \F_q[T]$ with exactly $i$ distinct roots in $\F_q$ and satisfying 
\eqref{Eq:Star_even}. 
By Proposition \ref{Prop:Quadruples2_even},
\[\gamma_1+2\gamma_2+3\gamma_3=\frac{q^3-q}{2}.\]
By Propositions \ref{Prop:Quadruples3_even} and \ref{Prop:Quadruples4_even},
\[\gamma_1=\frac{q^3-3q^2+4q}{4}+\frac{q-2 \gcd(n,2)}{2},\]
and by Proposition \ref{Prop:Quadruples5_even},
\[\gamma_2=\frac{q^2-3q+2+2\gcd(n,2)}{2}.\]
Combining these results, we obtain
\[\gamma_3=\frac{q^3 - q^2 +4q -4\gcd(n,2) - 8}{12}.\]
By Proposition \ref{Finally_even} the number of polynomials satisfying 
\eqref{Eq:Star_even} is $(q^3-q^2)/2$ and hence
\[\gamma_0=\frac{q^3-q^2}{2}-(\gamma_1+\gamma_2+\gamma_3)=\frac{q^3 - q^2 - 2q + 2\gcd(n,2) - 2}{6}.\]

The number of values of $N$ such that \eqref{Condition_qeven} is satisfied equals 
$3$ times the number of irreducible polynomials $F(T)$ satisfying \eqref{Eq:Star_even} 
(corresponding to solutions with $N\in \F_{q^3} \setminus \F_q$) 
plus the number of polynomials $F(T)$ with a unique $3$-fold root in $\F_q$ and satisfying \eqref{Eq:Star_even} (corresponding to solutions with $N\in \F_q$).
Thus this number is 
\[3\gamma_0 + \frac{q-2\gcd(n,2)}{2}=\frac{q^3 - q^2 - q - 2}{2}.\]
\end{proof}

%
%


\section{Acknowledgements}

The first author was partially supported by the Ministry for Education, University and Research of Italy (MIUR) and by the Italian National Group for Algebraic and Geometric Structures and their
	Applications (GNSAGA-INdAM). The second author was partially supported by the J\'anos Bolyai Research Scholarship of the Hungarian Academy of Sciences and by the National Research, Development and Innovation Office - NKFIH under the grants PD 132463 and K 124950.


\begin{thebibliography}{99}

\bibitem{B2003}
S. Ball: 
The number of directions determined by a function over a finite field, J. Combin. Theory Ser. A
{\bf 104} (2003), 341--350.

\bibitem{BBBSSz}
S. Ball, A. Blokhuis, A.E. Brouwer, L. Storme and T. Sz\H{o}nyi:
On the number of slopes of the graph of a function definied over a finite field, J. Combin. Theory Ser. A
{\bf 86} (1999), 187--196.

\bibitem{2} 
S. Ball, A. Blokhuis and M. Lavrauw: Linear $(q+1)$-fold
blocking sets in $\mathrm{PG}(2, q^4)$, Finite Fields Appl. {\bf 6}(4) (2000), 294--301.

\bibitem{3} 
D. Bartoli, M. Giulietti, G. Marino and O. Polverino:
Maximum Scattered Linear Sets and Complete Caps in Galois Spaces,
Combinatorica {\bf 38} (2018), 255--278.

\bibitem{BM}
D. Bartoli and M. Montanucci:
On the classification of exceptional scattered polynomials,
J. Combin. Theory Ser. A {\bf 179} (2021), 105386.

\bibitem{newscatt}
D. Bartoli, C. Zanella and F. Zullo:
A new family of maximum scattered linear sets in $\PG(1,q^6)$, Ars Math.\ Contemp. {\bf 19} (2020), 125--145.

\bibitem{BZ}
D. Bartoli and Y. Zhou: Exceptional scattered polynomials, J. Algebra {\bf 509} (2018), 507--534.

\bibitem{BL2000}
 A. Blokhuis and M. Lavrauw: Scattered Spaces with Respect
to a Spread in $\mathrm{PG}(n, q)$, Geom. Dedicata {\bf 81}(1-3) (2000), 230--243.


\bibitem{c20}
R. Calderbank and W.M. Kantor: The geometry of two-weight codes, Bull. Lond. Math. Soc. {\bf 18}, (1986), 97--122.

\bibitem{qres}
B. Csajb\'ok: Scalar $q$-subresultants and Dickson matrices, 
J. Algebra {\bf 547} (2020), 116--128.

\bibitem{5}
 B. Csajb\'ok, G. Marino and O. Polverino: 
 Classes and equivalence of linear sets in $\mathrm{PG}(1,q^n)$,
 J.\ Combin.\ Theory Ser.\ A {\bf 157} (2018), 402--426.
 
\bibitem{CMPZ} 
B. Csajb\'ok, G. Marino, O. Polverino and C. Zanella: A new family of MRD-codes, Linear Algebra and its applications {\bf 514} (2018), 203--220.
  
\bibitem{6}
B. Csajb\'ok, G. Marino, O. Polverino and F. Zullo: 
Maximum scattered linear sets and MRD-codes, J.\ Algebraic Combin.\ {\bf 46} (2017), 517--531.
 
\bibitem{7}
B. Csajb\'ok, G. Marino and F. Zullo: 
New maximum scattered linear sets of the projective line, 
Finite Fields Appl.\ {\bf 54} (2018), 133--150.
   
\bibitem{8}
 B. Csajb\'ok and C. Zanella: On the equivalence of linear sets,
Des.\ Codes Cryptogr.\ {\bf 81} (2016), 269--281.
 
\bibitem{CZ2016}
B. Csajb\'ok and C. Zanella: On scattered linear sets of pseudoregulus type in $\mathrm{PG}(1, q^t)$, Finite Fields Appl. {\bf 41} (2016), 34--54.
  
\bibitem{10}
B. Csajb\'ok and C. Zanella: Maximum scattered
$\mathbb{F}_q$-linear sets of $\mathrm{PG}(1,q^4)$, Discrete Math.\ {\bf 341} (2018), 74--80.







\bibitem{FM}
A. Ferraguti and G. Micheli: Exceptional scatteredness in prime degree, 
J.\ Algebra {\bf 565} (2021), 691--701.

\bibitem{Hirs0}
J. W. P. Hirschfeld: Ovals in Desarguesian planes of even order, Ann.\
Mat.\ Pura Appl.\ {\bf 102} (1975), 79--89.

\bibitem{Hirsbook}
J. W. P. Hirschfeld: Projective Geometries over Finite Fields, second
ed. Clarendon Press, Oxford, 1998.
%



\bibitem{18}
M. Lavrauw: Scattered Spaces with respect to Spreads and Eggs in
Finite Projective Spaces, Ph.D. Thesis, 2001.









\bibitem{25}
G. Lunardon: MRD-codes and linear sets, J. Combin. Theory Ser. A {\bf 149} (2017), 1--20.


\bibitem{LP2001} G. Lunardon, O. Polverino: Blocking sets and derivable partial spreads, J. Algebraic Comb. {\bf 14} (2001) 49--56.


\bibitem{LTZ2018} G. Lunardon, R. Trombetti, Y. Zhou: Generalized twisted Gabidulin codes, J. Comb. Theory, Ser. AMenichet  G. Menichetti: Roots of affine polynomials, in: Combinatorics
’84, Ann. Discrete Math. 30 (1986), 303–310.{\bf 159} (2018) 79--106.

\bibitem{trinomgen} G. Marino, M. Montanucci and F. Zullo:
MRD-codes arising from the trinomial $x^q + x^{q^3} + cx^{q^5} \in  \mathbb{F}_{q^6}[x]$, 
Linear Algebra Appl.\ {\bf 591} (2020), 99--114.



\bibitem{Menichetti} G. Menichetti: Roots of affine polynomials, in: Combinatorics
’84, Ann. Discrete Math. {\bf 30} (1986), 303-–310.

\bibitem{Payne} S. Payne: A complete determination of translation ovoids in
finite Desarguesian planes, Atti.\ Accad.\ Naz.\ Lincei Rend.\ {\bf 51} (1971), 328--331.


\bibitem{nuovo}
O. Polverino, G. Zini and F. Zullo: On certain linearized polynomials with high degree and kernel of small dimension, J. Pure Appl.\ Algebra {\bf 225}(2) (2021), Article 106491.

\bibitem{PZ} O. Polverino and F. Zullo: On the number of roots of some linearized polynomials, Linear Algebra Appl.\ {\bf 601} (2020), 189--218.

\bibitem{Sheekey} J. Sheekey: A new family of linear maximum rank distance codes,
Adv. Math. Commun. {\bf 10}(3) (2016), 475--488.

\bibitem{SheekeySurvey} J. Sheekey: MRD Codes: Constructions and Connections, in:
Combinatorics and Finite Fields: Difference Sets, Polynomials,
Pseudorandomness and Applications Ed. by Schmidt, Kai-Uwe and
Winterhof, Arne, Series: Radon Series on Computational and Applied
Mathematics 23, De Gruyter 2019.

\bibitem{WL} B. Wu, Z. Liu: Linearized polynomials over finite fields revisited, Finite Fields Appl. {\bf 22} (2013), 79--100.

\bibitem{Zanellanuovo} C. Zanella: A condition for scattered linearized polynomials involving Dickson matrices, J. Geometry {\bf 110} (2019), Article number: 50.

\bibitem{vertex}
C. Zanella and F. Zullo: Vertex properties of maximum scattered linear sets of $\mathrm{PG}(1,q^n)$,
Discrete Math.\ {\bf 343} (2020), Article 111800.

\end{thebibliography}
\end{document}